\numberwithin{equation}{section}
\theoremstyle{plain}
\newtheorem{theorem}{Theorem}[section]
\newtheorem{lemma}[theorem]{Lemma}
\newtheorem{proposition}[theorem]{Proposition}
\theoremstyle{definition}
\newtheorem{definition}[theorem]{Definition}
\newtheorem{example}[theorem]{Example}
\newtheorem{notation}[theorem]{Notation}
\newtheorem{construction}[theorem]{Construction}
\newtheorem{remark}[theorem]{Remark}
\newtheorem{question}[theorem]{Question}
\let\c@equation\c@theorem  
\DeclareMathOperator{\Ext}{Ext}
\DeclareMathOperator{\gr}{gr} 
\DeclareMathOperator{\Ima}{Im}
\DeclareMathOperator{\Ker}{Ker}
\DeclareMathOperator{\Tor}{Tor}
\newcommand{\DOT}{\setlength{\unitlength}{1pt}\begin{picture}(2.5,2)
               (1,1)\put(2,2.5){\circle*{2}}\end{picture}}
\newcommand{\bu}{\DOT} 
\newcommand{\mc}{\mathcal}
\newcommand{\NN}{\mathbb N}
\newcommand{\coh}{{\rm H}}
\newcommand{\N}{{\mathbb N}}
\newcommand{\id}{\mbox{\rm id}}
\newcommand{\Hom}{\mbox{\rm Hom\,}}
\newcommand{\ot}{\otimes}
\newcommand{\ZZ}{\mathbb{Z}}
\newcommand{\HH}{{\rm HH}}
\newcommand{\ds}{\displaystyle}
\newcommand{\beq}{\begin{equation}}
\newcommand{\eeq}{\end{equation}}
\begin{document}

\title[PBW deformations of smash product algebras from Hopf actions]
{Poincar\'e-Birkhoff-Witt deformations of smash product algebras from Hopf actions on Koszul algebras}

\author{Chelsea Walton and Sarah Witherspoon}

\address{Department of Mathematics, Temple University, Philadelphia, Pennsylvania 19122, USA}
\email{notlaw@temple.edu}

\address{Department of Mathematics, Texas A\&M University, College Station, Texas 77843, USA}

\email{sjw@math.tamu.edu}

\bibliographystyle{abbrv}       

\begin{abstract}
Let $H$ be a Hopf algebra and let $B$ be a Koszul $H$-module algebra. We provide necessary and sufficient conditions for a filtered algebra to be a Poincar\'e-Birkhoff-Witt (PBW) deformation of the smash product algebra $B \#H$. Many examples of these deformations are given.
\end{abstract}



\maketitle


\setcounter{section}{-1}


\section{Introduction}

Given a Hopf algebra ($H$-) action on a  Koszul algebra $B$, the aim of this work is to provide necessary and sufficient conditions for a certain filtered algebra, namely $\mc{D}_{B,\kappa}$ in Notation~\ref{def:D Bkap} below, to be a {\it Poincar\'e-Birkhoff-Witt (PBW) deformation} of  the smash product algebra $B \# H$, i.e, gr$\mathcal{D}_{B,\kappa} \cong B \#H$ [Definition~\ref{def:PBW}]. 
One well-studied case is that of group actions on polynomial rings, where many algebras of interest arise
as such deformations, see 
for example
\cite{CBH}, \cite{Drinfeld:hecke},  \cite{EG:Symp}, \cite{Lusztig:hecke}, \cite{RS}, \cite{SW:DOA}.
For group actions  on other Koszul algebras, see 
\cite{LS}, \cite{NW}, \cite{SW}, \cite{SP}. 
There are some results involving Hopf algebra actions, such as those of Khare  
\cite{Khare} when $H$ is cocommutative and $B$ is a polynomial algebra. More specifically, 
 the case when $H = U({\mathfrak{g}})$, with $\mathfrak{g}$ the Lie algebra of a (not necessarily connected) reductive algebraic group, was studied by Etingof, Gan, and Ginzburg
\cite{EGG} and by Khare and Tikaradze  \cite{TK}
where ${\mathfrak{g}} = {\mathfrak{sl}}_2$.
Results for an  action of the quantized enveloping algebra $H=U_q(\mathfrak{sl}_2)$ on the quantum plane are provided by Gan and Khare \cite{GK}. 

The goal of this paper is to provide a general theorem encompassing all of the above known classes of examples from the literature. Specifically, Theorem~\ref{thm:mainintro} gives PBW deformation
conditions for $B\# H$, and it 
only requires the following of $H$ and $B$: (1) the antipode of the Hopf algebra $H$ is bijective, (2) the Koszul $H$-module algebra $B$ is connected ($B_0 = k$), and (3) the $H$-action on $B$ preserves the grading of $B$.
We then apply our theorem to several different choices of Hopf algebras acting on
Koszul algebras to obtain nontrivial PBW deformations, both known and new.
Our work indicates that such examples abound. 

Many ring theoretic properties are preserved under PBW deformation. To discuss this, let us consider the following definition.

\begin{definition} \label{def:PBW}
Let $D=\bigcup_{i \geq 0} F_i$ be a filtered algebra with $\{0\} \subseteq F_0 \subseteq F_1 \subseteq \cdots \subseteq D$. We say that $D$ is a {\it Poincar\'e-Birkhoff-Witt (PBW) deformation} of an $\N$-graded algebra $A$ if $A$ is isomorphic to the associated graded  algebra $\gr_FD = \bigoplus_{i \geq 0} F_i/F_{i-1}$, as $\N$-graded algebras.
\end{definition}

Now if gr${}_F D$ is an integral domain, prime, or (right) noetherian, then so is $D$. Moreover, if $D$ is affine with the standard filtration $F'$, then the Gelfand-Kirillov (GK) dimensions of $D$ and of gr${}_{F'}D$ are equal; GKdim(gr${}_F D$) $\leq$ GKdim($D$) for a general filtration $F$ of $D$. The Krull dimension and global dimension of gr${}_F D$ serve as upper bounds for the corresponding dimensions of $D$.
These ring theoretic results can all be found in \cite{MR}.
Homological properties preserved under PBW deformation have also been investigated; see \cite{BT} and \cite{WZ} regarding the Calabi-Yau property, for instance.
The representation theory of some classes of PBW deformations of smash product algebras has been thoroughly studied in the literature and still remains an active area of research. Some examples of  PBW deformations whose representation theory is of interest include rational Cherednik algebras, symplectic reflection algebras, and various types of Hecke algebras (see, for example, \cite{Drinfeld:hecke}, \cite{EGG}, \cite{EG:Symp},  \cite{Lusztig:hecke}, \cite{RS}, and for more recent work, see  \cite{DingTsy}, \cite{LosevTsy}, \cite{Tik2010}, \cite{Tsy}) .

In order to state the main result, we need the following notation and terminology. Let $k$ be a field of arbitrary characteristic  and let an unadorned $\otimes$ mean $\otimes_k$. 
Let $\N$ denote the natural numbers, including 0. 
Recall that an $\N$-graded algebra is {\it Koszul} if its trivial module $k$ admits a linear minimal graded free resolution; see \cite[Chapter~2]{PP} for more details.

\begin{notation}
\label{not:B,H,kappa} [$H, B, I, \kappa, \kappa^C, \kappa^L$]  First, let $V$ be a finite dimensional vector space over $k$.
\smallskip

\noindent (i) Let $H$ be a Hopf algebra with the standard structure notation: $(H, m, \Delta, u ,\epsilon, S)$. Here, we assume that the antipode $S$ of $H$ is bijective. 
\smallskip

\noindent (ii) Let $B = T_k(V)/(I)$ be an $\mathbb{N}$-graded, Koszul, left $H$-module algebra $B = \bigoplus_{j \geq 0} B_j$ with $B_0 =k$ and $I\subseteq V\ot V$. We assume that 
the action of $H$ preserves the grading and the subspace $I$ of $V\ot V$. 
So in this case, $V$ is an $H$-module. 
\smallskip

\noindent (iii) Take $\kappa:I \rightarrow H \oplus (V \otimes H)$ to be a $k$-bilinear map,
where $\kappa$ is the sum of its {\it constant} and {\it linear} parts $\kappa^C:I \rightarrow H$ and $\kappa^L:I \rightarrow V \otimes H$, respectively. 
\end{notation}

\begin{notation} \label{def:D Bkap} [$\mc{D}_{B,\kappa}$]  
Let $\mc{D}_{B,\kappa}$ be the filtered $k$-algebra given by
$$\mc{D}_{B,\kappa} = \frac{T_k(V) \# H}{\left(r - \kappa(r)\right)_{r\in I}}.$$
Here, we assign the elements of $H$ degree 0. 
\end{notation}
\smallskip

Our main result is given as follows.

\begin{theorem}[Theorem~\ref{thm:main}] \label{thm:mainintro} 
The algebra $\mc{D}_{B,\kappa}$ is a PBW deformation of $B\# H$ if and only if the following conditions hold:
\begin{enumerate}
\item $\kappa$ is $H$-invariant [Definition~\ref{def:H-inv}]; 
\end{enumerate}
and the following equations hold for the  maps $\kappa^C \otimes \id - \id \otimes \kappa^C$ and $\kappa^L \otimes \id - \id \otimes \kappa^L$, which are defined on the intersection $(I \otimes V) \cap (V \otimes I)$:
\begin{enumerate}
\item[(b)] ${\rm Im}(\kappa^L \otimes \id - \id \otimes \kappa^L) \subseteq I$;

\item[(c)] 
$\kappa^L \circ (\kappa^L \otimes \id - \id \otimes \kappa^L) = -(\kappa^C \otimes \id - \id \otimes \kappa^C)$;

\item[(d)] 
$\kappa^C \circ ( \id \otimes \kappa^L - \kappa^L \otimes \id) \equiv 0.$
\end{enumerate}
\end{theorem}

In the case that $H$ is cocommutative and $B$ is the symmetric algebra $S(V)$,
this result was proven by Khare \cite[Theorem~2.1]{Khare}, via the Diamond Lemma.
Our proof is a generalization of that of Braverman and Gaitsgory \cite[Lemma~0.4, Theorem~0.5]{BG} 
(where $H=k$) and of Shepler and the second author \cite[Theorem~5.4]{SW} (where $H$
is a group algebra). 

Background information on Hopf algebra (co)actions, Hochschild cohomology, and deformations of algebras are provided in Section~\ref{sec:background}. In Section~\ref{sec:resolution}, we produce a free resolution of the smash product algebra $B\#H$; see Construction~\ref{const:X} and Theorem~\ref{thm:resolution}. This resolution is 
adapted from Guccione and Guccione \cite{GG}; Negron independently constructed a similar 
resolution \cite{Negron}. 
Our resolution is used in the proof of Theorem~\ref{thm:mainintro}, which is given in Section~\ref{sec:mainthm}. Many examples of  PBW deformations of $B \#H$ are provided in Section~\ref{sec:examples}, including/involving:
\begin{itemize} 
\item{} [Example~\ref{ex:CBH}] the {\it Crawley-Boevey-Holland algebras}; 
\item {}[Examples~\ref{ex:H8} and \ref{ex:Ha1}] some actions of semisimple, noncommutative, noncocommutative, Hopf algebras on skew polynomial rings;
\item{} [Examples~\ref{ex:T(n)} and~\ref{ex:Hsw}] actions of the Sweedler and the 
  Taft algebras on the polynomial ring $k[u,v]$; 
\item{} [Example~\ref{ex:Uqsl2}] the {\it quantized symplectic oscillator algebras of rank 1}.
\end{itemize}
All of the examples of $B \# H$ above have nontrivial PBW deformations.


\section{Background material} \label{sec:background}

We begin by discussing Hopf (co)actions on algebras and (co)modules and end with a discussion on deformations of algebras. For further background on these topics, we refer the reader to \cite{Montgomery} and \cite{BG,Gerstenhaber},
respectively.

\subsection{Hopf algebra (co) actions}\label{subsec:actions} 

\begin{definition} \label{def:Haction} (i) For a left $H$-module $M$, we denote the $H$-action by $\cdot: H \otimes M \to M$, that is by $h \cdot m \in M$ for all $h \in H$, $m \in M$. Similarly for all $h \in H$ and $m \in M$, we denote the right $h$-action on $m$ by $m \cdot h$.
\smallskip

\noindent (ii) Given a Hopf algebra $H$ and an algebra $A$, we say
that {\it $H$ acts on $A$} (from the left, as a Hopf algebra) if $A$ is a left $H$-module and
$$h \cdot (ab) = \sum (h_1 \cdot a)(h_2 \cdot b)
\text{\hspace{.1in} and \hspace{.1in}} h \cdot 1_A = \epsilon(h)
1_A$$  
for all $h \in H, a,b \in A$, where the comultiplication is given by $\Delta(h) =\sum h_1 \otimes h_2$ (Sweedler's notation). In this case, we say that $A$ is a {\it left $H$-module algebra}.
\smallskip

\noindent (iii) For any left $H$-comodule $M$, we denote the left $H$-coaction by $\rho(M) \subseteq H \otimes M$, where $\rho(m) = \sum m_{-1} \otimes m_0$ for $m_{-1} \in H$ and $m, m_0 \in M$. Likewise, the right $H$-coaction on a right $H$-comodule  $M$ is given by $\rho(m) = \sum m_0 \otimes m_1$ for $m, m_0 \in M$ and $m_1 \in H$.
\end{definition}

Note that $H$ is naturally an $H$-bimodule via left and right multiplication. This yields a {\it left $H$-adjoint action on $H$} given by 
\begin{equation} \label{eq:Hadj0}
h \cdot \ell :=\sum h_1 \ell S(h_2)
\end{equation}
for $h,\ell \in H$. Moreover, if $V$ is a left $H$-module, 
we give $V \otimes H$ an $H$-bimodule structure as follows:
$h  (v \otimes \ell) = \sum (h_1 \cdot  v) \otimes h_2 \ell$ and
$(v \otimes \ell) h = v \otimes \ell h,$
for all $h, \ell \in H$ and $v \in V$.
A {\it left $H$-adjoint action on} $V \otimes H$ arises by combining these:
\begin{equation} \label{eq:Hadj1}
   h \cdot (v\ot \ell) := \sum (h_1 \cdot v) \ot h_2 \ell S(h_3).
\end{equation}
The left $H$-adjoint actions in \eqref{eq:Hadj0} and \eqref{eq:Hadj1} extend to the
standard  left $H$-adjoint action on $A=B \# H$ (where $B=T_k(V)/(I)$ 
as in Notation~\ref{not:B,H,kappa}(ii)), via Definition~\ref{def:Haction},
since the action of $H$ preserves $I$.

\medskip

Now we discuss the $H$-invariance of the map $\kappa$ [Notation~\ref{not:B,H,kappa}(iii)], which is one of the necessary conditions for the filtered algebra $\mc{D}_{B, \kappa}$ [Notation~\ref{def:D Bkap}] to be a PBW deformation of $B \# H$.

\begin{definition} \label{def:H-inv} Recall Notation~\ref{not:B,H,kappa}.
We say that the map $\kappa$ is {\it $H$-invariant} if 
$h \cdot  (\kappa(r)) = \kappa(h \cdot r)$ in $H\oplus (V\ot H)$, for any relation $r \in I$ and $h\in H$. 
\end{definition}

\subsection{Deformations of algebras and Hochschild cohomology}

In this part, we remind the reader of the notion of a deformation of a $k$-algebra $A$ and how Hochschild cohomology plays a role in its construction.
This is  seminal work of Gerstenhaber~\cite{Gerstenhaber},
adapted to our graded setting as in Braverman and Gaitsgory~\cite{BG}. 

\begin{definition} \label{def:deformation} [$A_t$, $A_{(j)}$] 
Let $A$ be an associative algebra and let $t$ be an indeterminate. A {\it deformation}
of $A$ over $k[t]$ is an associative $k[t]$-algebra $A_t$ over $k[t]$, which is isomorphic to $A[t]$ as $k$-vector spaces, with multiplication given by
$$a_1 \ast a_2 = \mu_0(a_1 \ot a_2) + \mu_1(a_1 \ot a_2)t + \mu_2(a_1 \ot a_2)t^2 + \cdots,$$
for all $a_1,a_2 \in A$. Here, $\mu_i: A \ot A \rightarrow A$ is a $k$-linear map, referred to as the {\it $i$-th multiplication map}. Moreover,  $\mu_0(a_1 \ot a_2) =a_1a_2$ is the usual product in $A$.

Now assume that $A$ is graded by $\N$. 
A {\it graded deformation} of $A$ over $k[t]$ is an algebra $A_t$ as above, which is 
itself graded by $\N$, setting deg($t$) =1. The map $\mu_i$ is homogeneous of degree $-i$ in this case.
A {\it $j$-th level graded deformation} of $A$ is a graded associative algebra $A_{(j)}$ over $k[t]/(t^{j+1})$, that is isomorphic to $A[t]/(t^{j+1})$ as $k$-vector spaces, with multiplication given by
$$a_1 * a_2 =\mu_0(a_1 \ot a_2) + \mu_1(a_1 \ot a_2)t + \dots + \mu_j(a_1 \ot a_2)t^j.$$
The maps $\mu_i: A \otimes A \rightarrow A$ are extended to be linear over $k[t]/(t^{j+1})$.
\end{definition}

The associativity of $\ast$ for the deformation $A_t$ imposes conditions on the maps $\mu_i$. Specifically, for each degree $i$, the following equation must hold for all $a_1,a_2,a_3\in A$: 
\begin{equation} \label{eq:mu}
\sum_{j=0}^i \mu_j(\mu_{i-j}(a_1\ot a_2)\ot a_3) = 
\sum_{j=0}^i \mu_j(a_1\ot \mu_{i-j}(a_2\ot a_3)).
\end{equation}
We use Hochschild cohomology to study these equations.
 
\begin{definition} \label{def:Hoch coh} [$\mathbf{B}_{\DOT}(A)$]
Let $A$ be a $k$-algebra and let $M$ be an $A$-bimodule, or equivalently, an $A^e$-module. Here, $A^e:=A \ot A^{op}$. The {\it Hochschild cohomology} of $M$ is 
$\HH^n(A,M) := \Ext_{A^e}^n(A,M).$
Moreover, this cohomology may be derived from the {\it bar resolution} $\mathbf{B}_{\DOT}(A)$ of the $A^e$-module $A$:
$$\hspace{-.7in} \mathbf{B}_{\DOT}(A): \quad \quad\cdots \overset{\delta_3}{\longrightarrow} A^{\ot 4}
\overset{\delta_2}{\longrightarrow} A^{\ot 3}
\overset{\delta_1}{\longrightarrow} A \ot A
\overset{\delta_0}{\longrightarrow} A
\longrightarrow 0$$
where
$$\delta_n(a_0 \ot \cdots \ot a_{n+1}) := \sum_{i=0}^n (-1)^i a_0 \ot \cdots \ot a_i a_{i+1} \ot \cdots \ot a_{n+1}$$
for all $n \geq 0$ and $a_0, \dots, a_{n+1} \in A$. When $M =A$, write $\HH^n(A)$ for $\HH^n(A,A)$. Moreover, if $A$ is ($\N$-)graded, then $\HH^n(A)$ inherits the grading of $A$: If $A= \bigoplus_i A_i$, then  $\HH^n(A) = \bigoplus_i \HH^{n,i}(A)$.
\end{definition}

Note that $\Hom_k(A^{\ot n},A) \cong \Hom_{A^e}(A^{\ot (n+2)}, A)$ since the $A^e$-module $A^{\ot (n+2)}$ is induced from the $k$-module $A^{\ot n}$. 
We will identify these two Hom spaces often without further comment.
Now we make some remarks about the multiplication maps $\mu_i$.

\begin{remark} \label{rem:mu}
Using \eqref{eq:mu} for $i=1$, we see that
\begin{equation} \label{eq:mu1}
\mu_1(a_1 \ot a_2)a_3 + \mu_1(a_1a_2 \ot a_3) = a_1\mu_1(a_2 \ot a_3) + \mu_1(a_1\ot a_2 a_3),
\end{equation}
for all $a_1, a_2, a_3 \in A$.
In other words, $\mu_1$ is a Hochschild 2-cocycle on the bar resolution of $A$, that is, $\delta_3^*(\mu_1) := \mu_1 \circ \delta_3$ vanishes.
(Here we have identified the input $a_1 \ot a_2 \ot a_3$  
with $1 \ot a_1 \ot a_2 \ot a_3  \ot 1$ to apply $\delta_3$, under the identification of
Hom spaces described above.)
 
Next, 
using \eqref{eq:mu} for $i=2$, we see that
\begin{multline*}
\mu_2(a_1 \ot a_2) a_3 + \mu_1(\mu_1(a_1 \ot a_2) \ot a_3) + \mu_2(a_1a_2 \ot a_3)  \\
= a_1 \mu_2(a_2 \ot a_3) + \mu_1(a_1 \ot \mu_1(a_2 \ot a_3)) + \mu_2(a_1 \ot a_2a_3).
\end{multline*}
Therefore 
\begin{equation} \label{eq:mu2}
\delta_3^*(\mu_2)(a_1 \ot a_2 \ot a_3) =  \mu_1(\mu_1(a_1 \ot a_2) \ot a_3) -\mu_1(a_1 \ot \mu_1(a_2 \ot a_3)),
\end{equation}
for all $a_1, a_2, a_3 \in A$.
In other words, $\mu_2$ is a cochain on the bar resolution of $A$ whose coboundary is given by the right-hand side of \eqref{eq:mu2}.

For all $i \geq 1$,  (\ref{eq:mu}) is equivalent to  
\begin{equation} \label{eq:mui}
\delta_3^*(\mu_i)(a_1 \ot a_2 \ot a_3) = \sum_{j=1}^{i-1} \mu_j(\mu_{i-j}(a_1 \ot a_2) \ot a_3)- \mu_j(a_1 \ot \mu_{i-j}(a_2 \ot a_3)).
\end{equation}
That is to say, $\mu_i$ is a cochain on the bar resolution of $A$ whose coboundary is given by the right-hand side of \eqref{eq:mui}.
\end{remark}

\begin{definition}
The right-hand side of \eqref{eq:mui} is the {\it $(i-1)$-th obstruction} of the deformation $A_t$ of $A$ from Definition~\ref{def:deformation}.
An $(i-1)$-th level graded deformation (defined by maps $\mu_1,\ldots,\mu_{i-1}$) 
{\em lifts} to an $i$-th level graded deformation
if there exists a map $\mu_i$ for which $\mu_1,\ldots,\mu_{i-1},\mu_i$ define
an $i$-th level graded deformation. 
\end{definition}

The next proposition makes clear the choice of terminology in the above definition.
Ultimately one is interested in a deformation of $A$ over $k[t]$, and its
specializations at particular values of $t$.
The $i$-th level graded deformations are steps in this direction. 

\begin{proposition} \cite[Proposition~1.5]{BG} \label{prop:lifting}
All obstructions to lifting an $(i-1)$-th level graded deformation to the next level
lie in $\HH^{3,-i}(A)$.
An $(i-1)$-th level deformation lifts to the $i$-th level if and only if its $(i-1)$-th obstruction cocycle is zero in cohomology,
i.e., there is a map $\mu_i$ such that (\ref{eq:mui}) holds for all $a_1,a_2,a_3$ in~$A$. 
\end{proposition}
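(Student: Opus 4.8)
The plan is to run Gerstenhaber's classical obstruction argument in the graded setting. Fix the given $(i-1)$-th level graded deformation $A_{(i-1)}$, defined by homogeneous maps $\mu_1,\dots,\mu_{i-1}$ with $\deg\mu_j=-j$, and write $\Phi_i\colon A^{\ot 3}\to A$ for the $(i-1)$-th obstruction cochain, i.e. the right-hand side of \eqref{eq:mui}, viewed as a Hochschild $3$-cochain via the identification $\Hom_k(A^{\ot 3},A)\cong\Hom_{A^e}(A^{\ot 5},A)$. First I would record the degree bookkeeping: every summand of $\Phi_i$ has the form $\mu_j(\mu_{i-j}(-)\ot-)$ or $\mu_j(-\ot\mu_{i-j}(-))$ with $\deg\mu_j+\deg\mu_{i-j}=-i$, so $\Phi_i$ is homogeneous of degree $-i$.

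Next I would prove that $\Phi_i$ is a Hochschild $3$-cocycle, i.e. $\delta_4^*(\Phi_i)=0$; this is the technical core. The cleanest route is to pass to the Gerstenhaber circle product $\circ$ on $\bigoplus_n\Hom_k(A^{\ot n},A)$: for $2$-cochains one has $(\phi\circ\psi)(a_1\ot a_2\ot a_3)=\phi(\psi(a_1\ot a_2)\ot a_3)-\phi(a_1\ot\psi(a_2\ot a_3))$, so \eqref{eq:mui} reads $\delta_3^*(\mu_j)=\sum_{k=1}^{j-1}\mu_k\circ\mu_{j-k}$ for every $j<i$, and $\Phi_i=\sum_{j=1}^{i-1}\mu_j\circ\mu_{i-j}=\tfrac12\sum_{j+k=i,\,j,k\ge1}[\mu_j,\mu_k]$, the standard Maurer--Cartan obstruction. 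Applying $\delta_4^*$ and using the graded Leibniz rule for $\delta^*$ with respect to $\circ$ (equivalently, with respect to the Gerstenhaber bracket $[\cdot,\cdot]$), every resulting term either cancels in pairs or is killed by the lower-level identities $\delta_3^*(\mu_j)=\Phi_j$ for $j<i$ together with the graded Jacobi identity for $[\cdot,\cdot]$. Hence $\Phi_i$ determines a well-defined class $[\Phi_i]\in\HH^{3,-i}(A)$, which is exactly the first assertion. I expect this step---keeping the signs and the telescoping of the pre-Lie and Jacobi terms straight---to be the main obstacle; carrying it out by brute-force expansion in the bar complex is possible but unpleasant, so I would organize it entirely through $\circ$ and $[\cdot,\cdot]$.

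Finally I would deduce the lifting criterion. The associativity equations \eqref{eq:mu} at levels $0,\dots,i-1$ hold by hypothesis on $A$ and on $A_{(i-1)}$ and do not involve a new map $\mu_i$; over $k[t]/(t^{i+1})$ there are no equations at levels $>i$; so $\mu_1,\dots,\mu_{i-1}$ extends to an $i$-th level graded deformation exactly when there is a homogeneous map $\mu_i$ of degree $-i$ satisfying \eqref{eq:mu} at level $i$. By the reformulation in Remark~\ref{rem:mu}, \eqref{eq:mu} at level $i$ is precisely $\delta_3^*(\mu_i)=\Phi_i$. Thus such a $\mu_i$ exists iff $\Phi_i$ lies in the image of $\delta_3^*$, i.e. iff $\Phi_i$ is a Hochschild coboundary; and since $\Phi_i$ is homogeneous of degree $-i$ and $\delta_3^*$ preserves the grading, any solution may be replaced by its degree-$(-i)$ component, so the homogeneity requirement imposes no extra constraint. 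Therefore the $(i-1)$-th level deformation lifts to the $i$-th level if and only if $[\Phi_i]=0$ in $\HH^{3,-i}(A)$, which is the second assertion, completing the proof.
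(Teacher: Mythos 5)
Your argument is correct and follows essentially the same route as the source the paper relies on: the paper does not prove this proposition itself but cites \cite[Proposition~1.5]{BG}, whose proof is exactly the classical Gerstenhaber obstruction argument you give (degree bookkeeping showing the obstruction is homogeneous of degree $-i$, the cocycle property via the circle product/bracket calculus, and the fact that $\delta_3^*$ preserves internal degree so that lifting is equivalent to vanishing of the class in $\HH^{3,-i}(A)$). The only part left schematic is the sign/Jacobi bookkeeping in the cocycle step, which you correctly identify as the technical core and which is standard.
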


The connection between graded deformations and PBW deformations is well known; 
the following statement is a consequence of the canonical embedding of $A$
as a $k$-linear direct summand of $A[t]$, with splitting map given by specialization
at $t=0$.

\begin{proposition} \label{prop:PBWA_t}  \cite[Remark~1.4]{BG}
Given a graded algebra $A$ and a graded deformation $A_t$ of $A$, then $A_t$ specialized at $t=1$ is a PBW deformation of $A$. \qed
\end{proposition}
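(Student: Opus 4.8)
The plan is to produce an explicit algebra filtration on the specialization $A_1:=A_t\otimes_{k[t]}k[t]/(t-1)$ and to identify its associated graded algebra with $A$. Realizing $A_t=A\otimes_k k[t]$ as a graded $k[t]$-module (as in Definition~\ref{def:deformation}), with $t$ central of degree $1$, write $A_t=\bigoplus_{n\ge 0}(A_t)_n$ and let $\pi:A_t\to A_1$ be the canonical surjection, so $\ker\pi=(t-1)A_t$. Since $A_t$ is free over $k[t]$, the element $t-1$ is a non-zero-divisor; thus $A=A\otimes 1$ is a $k$-linear direct summand of $A_t$ with complement $(t-1)A_t=\ker\pi$, and so $\phi:=\pi|_A:A\hookrightarrow A_t\xrightarrow{\pi}A_1$ is a $k$-linear isomorphism. (Specialization at $t=0$ recovers $A$ with its original product $\mu_0$; this is the splitting of $A\hookrightarrow A_t$ referred to above, and $\mu_0$ is what will survive in the associated graded.) Define $F_pA_1:=\pi\bigl(\bigoplus_{n\le p}(A_t)_n\bigr)$; because $\pi$ is an algebra map and $A_t$ is graded, $\{F_pA_1\}$ is an exhaustive increasing algebra filtration with $F_pA_1=0$ for $p<0$, of the type in Definition~\ref{def:PBW}.

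Next I would verify that $\phi$ is strictly filtered for the grading filtration $\mathcal F_pA:=\bigoplus_{i\le p}A_i$ on $A$. If $a\in A_d$, then $a\otimes 1\in(A_t)_d$ forces $\phi(a)\in F_dA_1$; conversely, any element of $(A_t)_n$ has the form $\sum_{j=0}^n a_{n-j}\otimes t^j$ with $a_{n-j}\in A_{n-j}$, and its image under $\pi$ equals $\phi\bigl(\sum_j a_{n-j}\bigr)$ with $\sum_j a_{n-j}\in\mathcal F_nA$. Hence $\phi(\mathcal F_pA)=F_pA_1$ for every $p$, so $\phi$ induces a graded $k$-linear isomorphism $\gr_{\mathcal F}A\xrightarrow{\sim}\gr_FA_1$. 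Since $\gr_{\mathcal F}A$ is canonically $A$, this already gives $\gr_FA_1\cong A$ as graded vector spaces.

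It remains to see that this identification is multiplicative, and here the degree constraint on the deformation is exactly what is needed. For homogeneous $a\in A_p$, $b\in A_q$, the product in $A_1$ is $\phi(a)\phi(b)=\pi\bigl(\sum_{i\ge 0}\mu_i(a\otimes b)\otimes t^i\bigr)=\phi\bigl(\sum_{i\ge 0}\mu_i(a\otimes b)\bigr)$, a finite sum because $\mu_i$ is homogeneous of degree $-i$, so that $\mu_i(a\otimes b)\in A_{p+q-i}$, which vanishes for $i>p+q$. The leading term ($i=0$) is $\mu_0(a\otimes b)=ab\in A_{p+q}$, while each term with $i\ge 1$ lies in $A_{p+q-i}$, i.e.\ in $\phi(\mathcal F_{p+q-1}A)=F_{p+q-1}A_1$. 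Therefore the class of $\phi(a)\phi(b)$ in $(\gr_FA_1)_{p+q}$ equals the class of $\phi(ab)$, so under $\gr_{\mathcal F}A\cong\gr_FA_1$ the product of the classes of $a$ and $b$ maps to the class of $ab$. Extending bilinearly, $\gr_FA_1\cong A$ as $\N$-graded algebras, hence $A_1$ is a PBW deformation of $A$.

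The argument is essentially all bookkeeping, and the only delicate point is the last step: one must use that each $\mu_i$ is homogeneous of degree $-i$ in order to push every nonleading term $\mu_i(a\otimes b)$ with $i\ge 1$ into strictly lower filtration degree, so that passing to $\gr_F$ collapses the deformed product back onto $\mu_0$. Without the grading hypothesis this fails, which is why the statement is about \emph{graded} deformations.
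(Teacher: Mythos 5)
Your argument is correct and is exactly the standard proof the paper leaves to the citation of Braverman--Gaitsgory: decompose $A[t]=A\oplus(t-1)A[t]$, filter the fiber at $t=1$ by images of the graded pieces of $A_t$, and use that each $\mu_i$ has degree $-i$ so that only $\mu_0$ survives in the associated graded. Nothing is missing; your write-up simply fills in the bookkeeping behind the paper's one-sentence remark about the canonical embedding of $A$ as a direct summand of $A[t]$.
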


Now we explain that the two notions of deformation of $B\#H$ coincide; recall Notations~\ref{not:B,H,kappa} and~\ref{def:D Bkap}. The following result is well known in
related contexts, but
we include some details for the readers' convenience.

\begin{proposition} \label{prop:doa equiv}
The following statements are equivalent.
\begin{itemize}
\item The algebra $\mc{D}_{B,\kappa}:=  \left(T_k(V) \# H\right)/\left(r - \kappa(r)\right)_{r \in I}$ is a PBW deformation of $B\#H$.
\smallskip
\item The algebra $\mc{D}_{B,\kappa,t} := \left(T_k(V) \# H\right)[t]/\left(r - \kappa^L(r) t - \kappa^C(r) t^2\right)_{r \in I}$ is a graded deformation of $B\# H$ over $k[t]$.
\end{itemize}
\end{proposition}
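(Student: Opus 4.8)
The plan is to prove the equivalence by exhibiting a $k$-linear grading-preserving isomorphism between $\mc{D}_{B,\kappa,t}$ specialized at $t=1$ and $\mc{D}_{B,\kappa}$, and then invoking Proposition~\ref{prop:PBWA_t} together with a standard faithful-flatness/dimension-counting argument. First I would observe that $\mc{D}_{B,\kappa,t}$ is by construction a quotient of the free $k[t]$-algebra $(T_k(V)\#H)[t]$ by a homogeneous ideal (with $\deg t = 1$, $\deg V = 1$, $\deg H = 0$, so each relation $r - \kappa^L(r)t - \kappa^C(r)t^2$ is homogeneous of degree $2$), hence $\mc{D}_{B,\kappa,t}$ is an $\N$-graded $k[t]$-algebra. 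Setting $t=0$ kills the linear and constant parts of $\kappa$ and recovers $B\#H$, while setting $t=1$ recovers exactly the defining relations $r - \kappa(r)$ of $\mc{D}_{B,\kappa}$. So the content of the proposition is the assertion that $\mc{D}_{B,\kappa,t}$ is \emph{flat} over $k[t]$ (equivalently, free as a $k[t]$-module, equivalently isomorphic to $(B\#H)[t]$ as a $k[t]$-module) precisely when $\gr \mc{D}_{B,\kappa} \cong B\#H$.

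The key steps are as follows. (1) For the direction ``$\mc{D}_{B,\kappa,t}$ is a graded deformation $\Rightarrow$ $\mc{D}_{B,\kappa}$ is PBW'': if $\mc{D}_{B,\kappa,t}$ is a graded deformation then by Proposition~\ref{prop:PBWA_t} its specialization at $t=1$ is a PBW deformation of $B\#H$; so I must identify that specialization with $\mc{D}_{B,\kappa}$. The specialization is $(T_k(V)\#H)/(r - \kappa^L(r) - \kappa^C(r))_{r\in I} = (T_k(V)\#H)/(r-\kappa(r))_{r\in I} = \mc{D}_{B,\kappa}$, using $\kappa = \kappa^L + \kappa^C$; this identification is immediate from the presentations. (2) For the converse: suppose $\gr_F \mc{D}_{B,\kappa}\cong B\#H$ as graded algebras, where $F$ is the filtration on $\mc{D}_{B,\kappa}$ induced from the tensor-algebra filtration ($V$ in degree $1$, $H$ in degree $0$). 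The standard Rees-algebra construction attached to this filtration produces a graded $k[t]$-algebra $\mathrm{Rees}_F(\mc{D}_{B,\kappa}) = \bigoplus_i F_i t^i \subseteq \mc{D}_{B,\kappa}[t]$, which is always $t$-torsion-free, hence flat over $k[t]$, and which satisfies $\mathrm{Rees}_F/(t)\cong \gr_F\mc{D}_{B,\kappa}$ and $\mathrm{Rees}_F/(t-1)\cong \mc{D}_{B,\kappa}$. The remaining task is to match $\mathrm{Rees}_F(\mc{D}_{B,\kappa})$ with $\mc{D}_{B,\kappa,t}$: there is an obvious surjection $\mc{D}_{B,\kappa,t}\twoheadrightarrow \mathrm{Rees}_F(\mc{D}_{B,\kappa})$ sending $v\mapsto vt$ for $v\in V$, $h\mapsto h$ for $h\in H$, $t\mapsto t$ (well-defined because the relation $r - \kappa^L(r)t - \kappa^C(r)t^2$ maps to $rt^2 - \kappa^L(r)t^2 - \kappa^C(r)t^2 = (r-\kappa(r))t^2$, which vanishes in the Rees algebra since $r-\kappa(r) = 0$ in $\mc{D}_{B,\kappa}$). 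This map is a graded surjection, and I would show it is injective by a dimension count in each graded degree: reducing mod $t$ gives a graded surjection $(T_k(V)\#H)/(r)_{r\in I} = B\#H \twoheadrightarrow \gr_F\mc{D}_{B,\kappa}$, which by hypothesis is an isomorphism; since both $\mc{D}_{B,\kappa,t}$ and $\mathrm{Rees}_F(\mc{D}_{B,\kappa})$ are $\N$-graded and the latter is free over $k[t]$, the graded Nakayama lemma promotes the mod-$t$ isomorphism to an isomorphism of $k[t]$-modules (in each fixed degree both sides are finitely generated graded $k[t]$-modules and the map is an isomorphism modulo $t$). Hence $\mc{D}_{B,\kappa,t}\cong\mathrm{Rees}_F(\mc{D}_{B,\kappa})$ is a graded deformation of $B\#H$ over $k[t]$.

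The main obstacle I anticipate is the bookkeeping in step (2): making the Rees-algebra comparison precise with the correct powers of $t$ (the defining relations of $\mc{D}_{B,\kappa,t}$ carry $t$ and $t^2$, reflecting that $v\in V$ acquires degree $1$ and the relation $r$ lives in degree $2$, so one pushes the relation into $\mathrm{Rees}$-degree $2$), and verifying that the natural map is not only surjective but graded-degreewise of finite type so that Nakayama applies. A cleaner alternative, which I might use instead, is to argue directly: $\mc{D}_{B,\kappa,t}$ is $t$-torsion-free if and only if $\mc{D}_{B,\kappa,t}/(t)$ has the ``right size'' in each degree, i.e. equals $B\#H$; and $\mc{D}_{B,\kappa,t}/(t) = (T_k(V)\#H)/(r)_{r\in I} = B\#H$ automatically as \emph{algebras}, but the flatness/PBW condition is exactly the statement that no extra collapsing happens, which is governed by whether $\gr \mc{D}_{B,\kappa}$ has the Hilbert series of $B\#H$. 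Either way the essential point is the translation between ``$t$-torsion-free Rees algebra'' and ``associated graded is as large as possible,'' and I would present whichever version keeps the grading conventions most transparent.
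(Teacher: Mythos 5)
Your proposal is correct, and the direction ``graded deformation $\Rightarrow$ PBW'' is exactly the paper's argument (specialize at $t=1$, identify the fiber with $\mc{D}_{B,\kappa}$ from the presentations, and quote Proposition~\ref{prop:PBWA_t}). For the other direction you take a genuinely different route: the paper chooses, via the PBW hypothesis, a $k$-linear section $\pi: B\# H\rightarrow T_k(V)\# H$ splitting the quotient onto $\mc{D}_{B,\kappa}$, extends it $k[t]$-linearly, and disposes of the resulting comparison with $\mc{D}_{B,\kappa,t}$ by a (rather terse) degree argument; you instead identify $\mc{D}_{B,\kappa,t}$ with the Rees algebra $\bigoplus_i F_i t^i$ of the filtration, getting $k[t]$-flatness for free from $t$-torsion-freeness and the two fibers $\gr_F\mc{D}_{B,\kappa}$ and $\mc{D}_{B,\kappa}$ at $t=0$ and $t=1$. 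This is a clean and more systematic packaging of the same bookkeeping, and it has the added merit of exhibiting $\mc{D}_{B,\kappa,t}$ intrinsically (as the Rees algebra) rather than through an arbitrary choice of splitting. Two small repairs to how you close the argument: first, do not invoke finite generation in the Nakayama step --- when $H$ is infinite dimensional (e.g.\ $U_q(\mathfrak{sl}_2)$ in Example~\ref{ex:Uqsl2}) the graded components of these algebras are infinite dimensional; but no finiteness is needed, since if $K$ is the kernel of your graded surjection onto the Rees algebra, flatness of the target gives $K=tK$, and an $\N$-graded module with $K=tK$ ($\deg t=1$, grading bounded below) vanishes by induction on degree. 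Second, your injectivity-mod-$t$ step uses that the \emph{canonical} surjection $B\# H\twoheadrightarrow \gr_F\mc{D}_{B,\kappa}$ is an isomorphism, whereas Definition~\ref{def:PBW} literally only provides \emph{some} graded isomorphism; these coincide when the graded pieces are finite dimensional (e.g.\ $H$ finite dimensional), and this stronger reading is the one the paper itself operates with (compare the ``dimension argument in each degree'' concluding the proof of Theorem~\ref{thm:main}), but a sentence acknowledging the identification would make your write-up airtight.
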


\begin{proof}
Assume that $\mc{D}_{B,\kappa}$ is a PBW deformation of $B\# H$.
By its definition, $\mc{D}_{B,\kappa,t}$ is an associative algebra, and so we need only see
that it is isomorphic to $B\# H [t]$ as a vector space. 
To  this end, use the PBW property to define a $k$-linear map $\pi: B\# H\rightarrow T_k(V)\#H$
whose composition with the quotient map onto $\mc{D}_{B,\kappa}$ is an isomorphism of
filtered vector spaces. 
Extend $\pi$ to  a $k[t]$-linear map from $B\# H [t]$ to $T_k(V)\# H [t]$. 
Its composition with the quotient map to $\mc{D}_{B,\kappa,t}$ is
an isomorphism of $k$-vector spaces; one sees this by a degree argument. 

Conversely, assume that $\mc{D}_{B,\kappa,t}$ is a graded deformation of $B\# H$ over $k[t]$.
We may specialize to $t=1$ to obtain $\mc{D}_{B,\kappa}$. Now apply Proposition~\ref{prop:PBWA_t} to conclude that $\mc{D}_{B,\kappa}$ is a PBW deformation of $B\#H$.
\end{proof}


\section{Resolutions for smash product algebras} \label{sec:resolution}

In this section, let $A$ denote the smash product $B\# H$, which is an $\mathbb{N}$-graded algebra:  $A = \bigoplus_{j \geq 0} (B_j \otimes H)$. Thus $A_0 \cong H$.
The aim is to construct a free $A^e$-resolution $X_{\bu}$
of the $A^e$-module $A$
from resolutions of $H$ and of $B$ (denoted by $C_{\bu}$ and $D_{\bu}$, respectively). This construction simultaneously generalizes results of Guccione and Guccione \cite{GG}
and of Shepler and the second author \cite[Section~4]{SW}. A similar resolution was constructed independently by Negron \cite{Negron}.

\begin{definition} \label{def:C_.} {}[$C_{\bu}$, $C_i$, $C_i'$]
For $i \geq 0$, let $C_i$ denote the $H^e$-module $H^{\ot (i+2)}$. The left $H$-comodule structure $\rho: C_i\rightarrow H\ot C_i$ is given by
$$
   \rho( h^0\ot h^1\ot\cdots \ot h^{i+1}) := \sum h^0_1\cdots h^{i+1}_1 \ot h^0_2
    \ot\cdots\ot h^{i+1}_2 \in H \otimes C_i
$$
for all $h^0,\ldots, h^{i+1}\in H$.  
For $h \in H$, the left (resp., right) $h$-action on an element $x \in C_i$ is given by left (resp., right) multiplication by $h$ in the leftmost (resp., rightmost) factor of $x$.
Now let
$$
C_{\bu}: \quad  \cdots\rightarrow C_1 \rightarrow C_0\rightarrow H\rightarrow 0
$$
be the bar resolution $\mathbf{B}_{\DOT}(H)$ of $H$ [Definition~\ref{def:Hoch coh}], which is an  $H^e$-free resolution of $H$. 

There is
an isomorphism of free $H^e$-modules $C_i\cong H\ot C_i'\ot H$  where $C_i'=H^{\ot i}$ if $i\geq 1$ and $C_0'=k$. We give each $C_i'$ the $H$-comodule structure inducing that on 
$C_i$ under the usual tensor product of comodules.
\end{definition}

\begin{remark} \label{rem:C_.}
The resolution $C_{\bu}$ satisfies the following conditions.

\noindent (i) The right $H$-action and left $H$-coaction on $C_i$ commute in the sense that
for all $x\in C_i$ and $h\in H$, 
$$
   \sum (x\cdot h)_{-1}\ot (x\cdot h)_0 = \sum x_{-1}h_1\ot (x_0\cdot h_2).
$$
That is, each $C_i$ is a Hopf module (for which the action is a left action
and the coaction is a right coaction). 
\smallskip

\noindent (ii) The differentials are left $H$-comodule homomorphisms. 
\end{remark}

\begin{definition} \label{def:D_.} {}[$D_{\bu}$, $D_i$, $D_i'$]
Recall that $B$ is a Koszul algebra. Let 
$$
   \cdots\rightarrow D_1\rightarrow D_0\rightarrow B\rightarrow 0
$$
be the {\it Koszul resolution} of $B$ as a $B^e$-module: 
$ \ D_0= B\ot B$, $\ D_1=B\ot V\ot B$,
$\ D_2=B\ot I\ot B$, and for each $n\geq 3$, 
$D_i=B\ot D_i'\ot B$ where
$$
  D_i' = \bigcap_{j=0}^{i-2} ( V^{\ot j}\ot I\ot V^{\ot (i-2-j)}).
$$
Each $D_i$ is a subspace of $B^{\ot (i+2)}$, and the differential on the Koszul resolution is the one induced by the canonical embedding of the Koszul resolution into the bar resolution of $B$. 
\end{definition}

\begin{remark} \label{rem:D_.}
The resolution $D_{\bu}$ satisfies the following conditions. 

\noindent (i) Each $B^e$-module $D_i$ is a left $H$-module and 
the differentials are $H$-module homomorphisms. 
\smallskip

\noindent (ii) The left actions of $B$ and of $H$ on $D_i$ are compatible in the sense that 
they induce a left action of $A=B\# H$ on $D_i$. 
\smallskip

\noindent (iii) In addition, the right $B$-action on $D_i$ is
compatible with the left $H$-action on $D_i$ in the sense that for all $h\in H$,
$y =  b^0 \ot y' \ot b^{1} \in D_i$, where $y'\in D_i'$, 
$b^0, b^1$, and $b\in B$,
\[
\begin{array}{rl}
   h\cdot (y\cdot b) ~=~ \sum (h_1\cdot y)\cdot (h_2\cdot b) 
&= \left[\sum (h_1 \cdot b^0) \ot (h_2 \cdot y') \ot (h_{3} \cdot b^{1}) \right] \cdot (h_{4} \cdot b)\\
&= \sum(h_1 \cdot b^0) \ot (h_2 \cdot y') \ot (h_{3} \cdot b^{1})b ~=~ (h \cdot y) \cdot b.
\end{array}
\]

\noindent (iv) Each
$D_i$ is considered to be a left $H$-comodule in a trivial way 
by requiring that it be $H$-coinvariant, that is,
the comodule structure is given by maps $\rho_i: D_i\rightarrow
H\ot D_i$,  where $\rho_i(y)=1\ot y$ for all $y\in D_i$.
The maps $\rho_i$ are maps of left $H$-modules, if we give $H \otimes D_i$
the tensor product $H$-module structure, where the factor $H$ has the 
adjoint
$H$-module structure. See Section~\ref{subsec:actions}. 
\end{remark}

\begin{construction} {}[$X_{\bu}$] \label{const:X}
We wish to combine the two resolutions, $C_{\bu}$ and $D_{\bu}$ from Definitions~\ref{def:C_.} and~\ref{def:D_.}, 
to form a resolution $X_{\bu}$ of $A=B\#H$ by $A$-bimodules, via a tensor product. To that end, we first apply ($A \otimes_H -$) to $C_{\bu}$.
Note that  $A$ is free as a right $H$-module (under multiplication), and that $A\ot _{H}
H\cong A$. The following  sequence of $A\ot H^{op}$-modules
 is therefore exact:  
$$
   \cdots\rightarrow A\ot_{H} C_1\rightarrow A\ot_{H}C_0\rightarrow
      A\rightarrow 0 . 
$$

Similarly, we apply ($- \otimes_B A$) to $D_{\bu}$.
Note that $A$ is free as a left $B$-module, and that $B\ot_B A\cong A$.
The following sequence of $B\ot A^{op}$-modules is therefore exact: 
$$
  \cdots\rightarrow D_1\ot_B A\rightarrow D_0\ot_B A\rightarrow
   A\rightarrow 0.
$$
We will next extend the actions on the modules in each of these two
sequences so that they become $A^e$-modules.
Then, we will  take their tensor product over $A$.

We extend the right $H$-module structure on $A\ot_{H} C_{\DOT}$ 
to a right $A$-module structure by defining a right-action of $B$ on $A \otimes_H C_{\bu}$ as follows. 
For all $a\in A$, $x\in C_i$, $b\in B$, we set
\begin{equation} \label{eq:X.1}
    (a\ot_H x) \cdot b := \sum a (x_{-1}\cdot b)\ot_H x_0.
\end{equation}
This does indeed 
make $A\ot_{H} C_i$ into a right $B$-module, and by combining with the 
right action of $H$, gives 
a right action of $A$ on $A\ot_{H}C_i$. Note that for $x=x^0 \otimes \cdots \otimes x^{i+1} \in C_i$ (with $x^0,\ldots, x^{i+1}\in H$), 
$$\rho(hx) ~=~ \sum (hx)_{-1} \otimes (hx)_0 ~=~
\sum h_1x_1^0  \cdots  x_1^{i+1} \ot h_2  x_2^0 \ot \cdots \ot x_2^{i+1}
~=~ \sum h_1 x_{-1} \ot h_2 x_0.$$
The action is well-defined: 
If $h\in H$, then
$$(ah\ot_H x)\cdot b \overset{\eqref{eq:X.1}}{=} \sum ah (x_{-1}\cdot b) \ot_H x_0 
  = \sum a (h_1 x_{-1}\cdot b)\ot_H h_2x_0 \overset{\eqref{eq:X.1}}{=} (a\ot_H hx)\cdot b.$$
Since the differentials on $C_{\DOT}$ are 
$H$-comodule homomorphisms [Remark~\ref{rem:C_.}(ii)],
this action commutes  with the differentials.

We extend the left $B$-module structure on $D_i\ot_B A$ 
to a left $A$-module structure by defining a left action of
$H$ by 
\begin{equation} \label{eq:X.2}
    h\cdot (y\ot_B a) := \sum (h_1\cdot y)\ot_B  h_2 a
\end{equation}
for all $h\in H$, $y\in D_i$, $a\in A$. 
It is well-defined since for all $h\in H$, $b\in B$, we have by the definitions in
Section~\ref{subsec:actions} that:
\[
\begin{array}{rlll}
h \cdot (yb \otimes_B a) &\overset{\eqref{eq:X.2}}{=} \sum (h_1 \cdot (yb)) \otimes_B (h_2 a S(h_3)) 
                                   &= \sum (h_1 \cdot y)(h_2 \cdot b) \otimes_B h_3 a S(h_4)&\\
                                   &= \sum (h_1 \cdot y) \otimes_B (h_2 \cdot b) h_3 a S(h_4)
                                   &= \sum (h_1 \cdot y) \otimes_B (h_2 \cdot (ba))
                                  &\overset{\eqref{eq:X.2}}{=} h \cdot (y \otimes_B ba).
\end{array}
\]

The left $H$-action on
$D_i$ is compatible with the right $B$-action on $D_i$ by Remark~\ref{rem:D_.}(iii).
Again this action commutes with the differentials
since the differentials on $D_{\DOT}$ are
$H$-module homomorphisms [Remark~\ref{rem:D_.}(i)]. 

We may now consider 
$A\ot_{H}C_{\DOT}$ and $D_{\DOT}\ot_B A$ to be 
complexes of $A^e$-modules via the $A$-bimodule structure defined above. 
We  take their tensor product over $A$, letting 
$
   X_{\DOT,\DOT} := (A\ot_{H}C_{\DOT})\ot_A (D_{\DOT}\ot_B A),
$
that is, for all $i,j\geq 0$, 
\begin{equation}\label{xij}
 X_{i,j}:= (A\ot_{H}C_i)\ot_A (D_j\ot _{B} A),
\end{equation}
with horizontal and vertical differentials
$$
   d^h_{i,j}: X_{i,j}\rightarrow X_{i-1,j} \ \ \ \mbox{ and }
   \ \ \ d^v_{i,j}: X_{i,j}\rightarrow X_{i,j-1}
$$
given by 
$  d^h_{i,j}:= d_i^{C_{\DOT}} \ot \id$ and 
$d^v_{i,j}:= (-1)^i\, \id\ot d_j ^{D_{\DOT}}$. 

Finally, let $X_{\DOT}$ be the total complex of $X_{\DOT, \DOT}$: 
\begin{equation}\label{resolution-X}
  \cdots\rightarrow X_2\rightarrow X_1\rightarrow X_0\rightarrow A\rightarrow 0,
\end{equation}
 with $X_n = \oplus_{i+j=n} X_{i,j}$, where $X_{i,j}$ is defined in (\ref{xij}). 
\end{construction}

\begin{theorem}\label{thm:resolution}
We have the following statements.
\begin{enumerate}
\item For each $i,j$, the $A^e$-module $X_{i,j}$ is isomorphic to $A\ot C_i'\ot D_j'\ot A$.
\item The complex $X_{\DOT}$ given in (\ref{resolution-X}) is a free
resolution of the $A^e$-module $A$.
\end{enumerate}
\end{theorem}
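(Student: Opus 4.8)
The plan is to establish both statements by tracking the module structures through the constructions of Section~\ref{sec:resolution}. For part (1), I would first compute the $A^e$-module structure on $A \ot_H C_i$ and on $D_j \ot_B A$ separately. Using the isomorphism $C_i \cong H \ot C_i' \ot H$ of free $H^e$-modules from Definition~\ref{def:C_.}, one gets $A \ot_H C_i \cong A \ot_H (H \ot C_i' \ot H) \cong A \ot C_i' \ot H$ as a left $A$-module (with the right $H$-module structure on the rightmost factor). Symmetrically, $D_j \cong B \ot D_j' \ot B$ gives $D_j \ot_B A \cong B \ot D_j' \ot A$ as a right $A$-module. The key point is then that the extended right $B$-action on $A \ot_H C_i$ (defined via \eqref{eq:X.1} using the coaction) and the extended left $H$-action on $D_j \ot_B A$ (defined via \eqref{eq:X.2}) are precisely what make the tensor product over $A$ collapse: $X_{i,j} = (A \ot_H C_i) \ot_A (D_j \ot_B A) \cong (A \ot C_i' \ot H) \ot_A (B \ot D_j' \ot A)$, and since $H \ot_A B$-type identifications work out ($A \cong B \# H$, so $H$ on the right of the first factor and $B$ on the left of the second factor recombine against $A$), this is isomorphic to $A \ot C_i' \ot D_j' \ot A$ as an $A^e$-module. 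I would verify the bimodule structure is the obvious free one by chasing the definitions; this is bookkeeping but needs the compatibilities recorded in Remarks~\ref{rem:C_.} and~\ref{rem:D_.}.

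For part (2), the strategy is the standard double-complex argument. Since $X_{\bu}$ is the total complex of the first-quadrant double complex $X_{\bu,\bu}$, it suffices to show that each row (or column) is exact except in degree zero, and then that the resulting complex of $A^e$-modules resolves $A$. Concretely: fix $j$ and consider the row $\cdots \to X_{1,j} \to X_{0,j} \to 0$, which is $(A \ot_H C_{\bu}) \ot_A (D_j \ot_B A)$. Because $A \ot_H C_{\bu} \to A$ is a resolution of $A$ by modules that are \emph{free} as right $A$-modules (by part (1), $A \ot_H C_i \cong A \ot C_i' \ot H$ is free on the left, hence in particular flat, as a right $A$-module — actually I should use that these are projective/flat right $A$-modules), tensoring over $A$ with $D_j \ot_B A$ is exact, so the $j$-th row has homology $A \ot_A (D_j \ot_B A) \cong D_j \ot_B A$ concentrated in horizontal degree $0$. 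Then the induced complex on row-homology is exactly $D_{\bu} \ot_B A \to A \to 0$, and since $A$ is free (hence flat) as a left $B$-module and $D_{\bu} \to B$ is a resolution, $D_{\bu} \ot_B A \to A$ is a resolution of $A$. A spectral sequence (or iterated mapping-cone) argument then gives that $\mathrm{Tot}(X_{\bu,\bu}) \to A$ is a quasi-isomorphism, i.e., $X_{\bu}$ is a resolution of $A$; freeness of each $X_n = \bigoplus_{i+j=n} X_{i,j}$ is immediate from part (1).

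The main obstacle I anticipate is part (1): carefully checking that the ad hoc right-$B$ action \eqref{eq:X.1} and left-$H$ action \eqref{eq:X.2} — which are engineered using the comodule structures on $C_i$ — really do combine to give, after tensoring over $A$, the \emph{free} $A^e$-module $A \ot C_i' \ot D_j' \ot A$ with its standard outer bimodule structure, rather than some twisted version. This requires untangling how the Hopf-module compatibility of Remark~\ref{rem:C_.}(i) interacts with the coinvariance of $D_j$ from Remark~\ref{rem:D_.}(iv), and verifying that the isomorphism $A \ot_H C_i \cong A \ot C_i' \ot H$ intertwines the ``twisted'' right $B$-action with multiplication in the $H$-factor after a change of variables using the antipode. (The bijectivity of $S$, assumed in Notation~\ref{not:B,H,kappa}(i), is presumably what guarantees this change of variables is invertible.) Once the bimodule identification in part (1) is nailed down, part (2) is a formal homological-algebra argument with no real surprises, provided one is careful that the modules involved are flat on the appropriate side so that the two tensor-over-$A$ operations used in the double-complex argument are exact.
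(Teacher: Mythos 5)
Your part (1) follows the same route as the paper: reduce to $(A\ot C_i'\ot H)\ot_A(B\ot D_j'\ot A)$ and collapse the tensor product over $A$. The paper does this by writing down the explicit map $(a\ot x\ot h,\ b\ot y\ot a')\mapsto \sum a(x_{-1}h_1\cdot b)\ot x_0\ot (h_2\cdot y)\ot h_3a'$, checking it is $A$-balanced over both $B$ and $H$, and exhibiting the inverse $a\ot x\ot y\ot a'\mapsto (a\ot x\ot 1)\ot_A(1\ot y\ot a')$; your sketch names exactly this verification, so as an outline it is fine. Note, though, that no antipode enters here: the inverse map is antipode-free, so your suggestion that bijectivity of $S$ is what makes part (1) work is misplaced.

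The genuine gap is in part (2), in your justification of row-exactness. Fixing $j$, you tensor the exact complex $A\ot_H C_{\bu}\to A\to 0$ of right $A$-modules with the left $A$-module $D_j\ot_B A$, and you argue exactness is preserved because the terms $A\ot_H C_i$ are free (flat) as right $A$-modules. That is the wrong flatness condition: applying $-\ot_A (D_j\ot_B A)$ to an exact complex remains exact if $D_j\ot_B A$ is flat as a \emph{left} $A$-module (or if, in addition to the terms, all images of the differentials are flat right $A$-modules); flatness of the terms alone does not suffice. The statement you actually need --- that $D_j\ot_B A\cong B\ot D_j'\ot A$, with the twisted left $A$-action \eqref{eq:X.2}, is free as a left $A$-module --- is the technical heart of the paper's proof of (b): it is established by the untwisting isomorphism $f(rh\ot y\ot b)=\sum r\ot (h_1\cdot y)\ot h_2b$ with inverse $g(r\ot y\ot hb)=\sum rh_2\ot (S^{-1}(h_1)\cdot y)\ot b$, and this is precisely where bijectivity of the antipode is used. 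The paper moreover shows inductively that the images of the differentials in $D_{\bu}\ot_B A$ are projective left $A$-modules, so that the K\"unneth theorem applies; your spectral-sequence (double complex) variant needs the same kind of input, either this freeness of $D_j\ot_B A$ on the left, or, if you run the argument column-wise, an analogous untwisting showing $A\ot_H C_i$ is free as a right $A$-module together with flatness of the relevant images. Once that flatness is actually proved rather than asserted, the rest of your part (2) is essentially the same bookkeeping as the paper's K\"unneth argument.
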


\begin{proof} 
(a) Write $C_i \cong H\ot C_i'\ot H$ and $D_j\cong B\ot D_j'\ot B$
for vector spaces $C_i'$ and $D_j'$,
as in Definitions~\ref{def:C_.} and~\ref{def:D_.}. Then 
\begin{eqnarray*}
   X_{i,j} & \cong &  (A\ot_{H}H\ot C_i' \ot H) \ot_A 
   (B\ot D_j'\ot B\ot_B A) \\
   & \cong & (A\ot C_i'\ot H)\ot_A (B\ot D_j'\ot A).
\end{eqnarray*}
We will show that this is isomorphic to $A\ot C_i'\ot D_j'\ot A$ as an
$A^e$-module. First define a map as follows:
\begin{eqnarray} \label{eq:resn}
\begin{array}{rll}
  (A\ot C_i'\ot H)\times (B\ot D_j'\ot A) &\rightarrow  
       & A\ot C'_i\ot D'_j\ot A\\
  (a\ot x\ot h , \ b\ot y \ot a') &\mapsto 
    & \sum a ( x_{-1} h_1\cdot b) \ot x_0 \ot (h_2\cdot y) \ot  h_3 a' 
\end{array}
\end{eqnarray}
for all $a,a'\in A$, $x\in C_i'$, $y\in D_j'$, $h\in H$, $b\in B$.
This map is $k$-bilinear by its definition, and we will check
that it is $A$-balanced. First let $b'\in B$. We rewrite $(a \ot x \ot h) \cdot b'$ as follows. First, using $A \ot C_i' \ot H \cong A \ot_H C_i$, identify this element with $a \ot_H  (1 \ot x \ot h) \in A \ot_H C_i$. By \eqref{eq:X.1}, 
$$(a \ot_H  (1 \ot x \ot h)) \cdot b' = \sum a((1 \ot x \ot h)_{-1} \cdot b') \ot_H (1 \ot x \ot h)_0.$$
By Definition~\ref{def:C_.}, and by identifying $x \in C_i'$ with $x^1 \ot x^2 \ot \cdots \ot x^i$, we have that
\[
\begin{array}{rl}
\rho(1 \ot x \ot h) &= \displaystyle \sum (1 \ot x \ot h)_{-1} \ot (1 \ot x \ot h)_0\\
&=\displaystyle \sum (x_1^1 x_1^2 \cdots x_1^i h_1) \ot (1 \ot x_2^1 \ot x_2^2 \ot \cdots \ot x_2^i \ot h_2).
\end{array}
\]
So, $(1 \ot x \ot h)_{-1} = x_{-1}h_1$ and $(1 \ot x \ot h)_0 = x_0 \ot h_2$.
Now $C_i\cong H\ot C_i'\ot H$ as an $H$-comodule, so
\begin{eqnarray*}
  ((a\ot x\ot h)\cdot b', \ b\ot y\ot a') & = & 
   \sum (a ( x_{-1}h_1 \cdot  b')\ot x_0\ot h_2 , \ b\ot y\ot a')\\
     &\mapsto & \sum a ( x_{-2}h_1 \cdot b') 
     (x_{-1}h_2  \cdot  b) \ot x_0\ot  (h_3  \cdot y) \ot h_4 a' .
\end{eqnarray*}
On the other hand,
\begin{eqnarray*}
  ((a\ot x\ot h, \ b'\cdot (b\ot y\ot a')) &=& 
   (a\ot x\ot h, \ b'b\ot y\ot  a')\\
   &\mapsto & \sum a ( x_{-1}h_1\cdot (b'b))  \ot x_0
             \ot (h_2\cdot y) \ot h_3  a'),
\end{eqnarray*}
which is the same as the previous image since $B$ is an $H$-module algebra. 
Now let $\ell\in H$. Then
\begin{eqnarray*}
    ((a\ot x\ot h)\cdot \ell , ~ b\ot y\ot a') &=&
     (a\ot x\ot h\ell ,~ b\ot y \ot a')\\
   & \mapsto & 
     \sum a (x_{-1} h_1\ell_1 \cdot b) \ot x_0 \ot 
    (h_2\ell_2 \cdot y) \ot h_3\ell_3 a' .
\end{eqnarray*}
On the other hand,
\begin{eqnarray*}
    (a\ot x\ot h, ~\ell\cdot (b\ot y\ot a')) &=&
      \sum (a\ot x\ot h, ~(\ell_1 \cdot b) \ot (\ell_2\cdot y) \ot \ell_3 a')\\
     &\mapsto & \sum a(x_{-1} h_1\ell_1 \cdot b) \ot x_0 \ot (h_2\ell_2\cdot y)
     \ot h_3\ell_3 a' ,
\end{eqnarray*}
which is the same as the previous image. 
Therefore, there is an induced map 
$$(A\ot C_i'\ot H)\ot_A (B\ot D_j'\ot A)
\rightarrow A\ot C_i'\ot D_j'\ot A.$$ 
Now, we verify that the map below is an inverse map of \eqref{eq:resn}:
\begin{equation} \label{eq:inverse}
   a\ot x\ot y\ot a' \mapsto (a\ot x\ot 1)\ot_A (1\ot y\ot a').
\end{equation}
It is clear that first applying \eqref{eq:inverse} then \eqref{eq:resn} yields the identity map on ${A \ot C_i' \ot D_j' \ot A}$. On the other hand, the image of first applying \eqref{eq:resn}, then \eqref{eq:inverse} to $(a \ot x \ot h,~b \ot y \ot a')$ is:\\

$
\hspace{.2in} \ds \sum (a(x_{-1}h_1 \cdot b) \ot x_0 \ot 1) \ot_A (1 \ot (h_2 \cdot y) \ot h_3 a')
$
\[
\begin{array}{rl}
{\hspace{1in}} &~= \ds \sum (a(x_{-1}h_1 \cdot b) \ot x_0 \ot 1) \ot_A (\epsilon(h_2) \ot (h_3 \cdot y) \ot h_4 a')\\
&~= \ds \sum (a(x_{-1}h_1 \cdot b) \ot x_0 \ot 1) \ot_A (h_2 \cdot ( 1 \ot  y \ot a'))\\
&~= \ds \sum (a(x_{-1}h_1 \cdot b) \ot x_0 \ot h_2) \ot_A ( 1 \ot  y \ot a')\\
&\overset{\eqref{eq:X.1}}{=}  ((a \ot x \ot h) \cdot b) \ot_A ( 1 \ot  y \ot a')\\
&~=  (a \ot x \ot h) \ot_A ( b \ot  y \ot a').\\
\end{array}
\]
Therefore the two $A^e$-modules, $X_{ij}$ and $A \otimes C_i' \otimes D_i' \otimes A$, are isomorphic as claimed.

(b) We wish to apply the K\"unneth Theorem to show that the complex $X_{\DOT}$ is a free resolution of the $A^e$-module $A$. To that end, 
we check that each term in the complex $D_{\DOT}\ot_{B}A$ is a 
free left $A$-module, and that the image of each differential
in the complex is also projective as a left $A$-module.
First write each $D_i\ot_B A\cong (B\ot D_i'\ot B)\ot_B A\cong
B\ot D_i'\ot A$. 
Define a $k$-linear map $f: A\ot D_i'\ot B\rightarrow B\ot D_i'\ot A$ by
$$
    f(rh\ot y\ot b) = \sum r \ot (h_1\cdot y) \ot h_2 b,
$$
for $h \in H$, $y \in D_i'$, and $r,b \in B$.
Give $A\ot D_i'\ot B$ the structure of a left $A$-module by requiring
$A$ to act by left multiplication on the leftmost factor. 
Clearly this is a free left $A$-module.
The map $f$ is an $A$-module homomorphism by the definition of the
left $A$-action on $B\ot D_i'\ot A$; see \eqref{eq:X.2}. 
We claim that the following map is an inverse map, so that $f$ is an
isomorphism of $A$-modules: Let $S^{-1}$ denote the (composition) inverse
of the antipode $S$ of $H$. 
Let $g: B\ot D_i'\ot A\rightarrow A \ot D_i'\ot B$ be the $k$-linear
map defined by 
$$
    g(r\ot y\ot hb) = \sum rh_2\ot (S^{-1}(h_1)\cdot y) \ot b.
$$
Since for each $h\in H$, we have $\sum h_2 S^{-1}(h_1)=
\varepsilon(h) = \sum S^{-1}(h_2)h_1$
(see e.g.\ \cite[Proposition 7.1.10]{Radford}),
the function $g$ is indeed the inverse of $f$. 
Thus, each term in the complex $D_{\DOT} \ot_B A$ is a free left $A$-module.

That the image of each differential is projective as a left $A$-module
may be proved inductively, starting on one end of the complex
\[
\xymatrix{
   \cdots \ar[r] & D_1\ot_B A \ar[r]^{d_1 \ot id} & D_0\ot _B A \ar[r]^{\hspace{.2in}d_0 \ot id} & A \ar[r] & 0,
}
\]
as follows. 
Since $A$ is a projective left $A$-module and $d_0\ot\id$ is surjective,
the map splits, implying that $\Ker (d_0\ot\id) = \Ima (d_1\ot\id)$ is a direct
summand of the free left $A$-module $D_0\ot _B A$.
Therefore it is projective.
Again, since $\Ima(d_1\ot \id)$
 is projective, the map $d_1\ot \id$ from $D_1\ot_B A$ to
its image splits so that $\Ker (d_1\ot\id) = \Ima (d_2\ot\id)$ is a direct
summand of the free left $A$-module $D_1\ot _B A$. Continuing in this way,
we see that $\Ima (d_i \ot\id)$ is a free left $A$-module for each $i$. 

The K\"unneth Theorem \cite[Theorem 3.6.3]{W}
then gives for each $n$ an exact sequence:
$$
 0\longrightarrow \bigoplus_{i+j=n} \coh_i(A\ot_{H}C_{\DOT})\ot_A 
  \coh_j(D_{\DOT}\ot_B A)
\longrightarrow \coh_n((A\ot_{H}C_{\DOT})\ot_A (D_{\DOT}\ot_B A))$$

\vspace{-.4cm}

$$ \hspace{4.45cm}
   \longrightarrow \bigoplus_{i+j=n-1} \Tor^A_1(\coh_i(A\ot_{H} C_{\DOT}),
  \coh_j (D_{\DOT}\ot_B A))\rightarrow 0.
$$
Now $A\ot_{H}C_{\DOT}$ and $D_{\DOT}\ot _B A$ are exact other than in
degree 0, where their homologies are each $A$: That is, $H_0(A \ot_H C_{\DOT}) = A$ and 
 $H_0(D_{\DOT} \ot_B A)  = A$.
Therefore the only potentially nonzero Tor term is when $i=0=j$,  or Tor$_1^A(A,A)$, yet this equals 0
 since $A$ is flat over $A$.
So for each $n$ we have
$$
  \coh_n((A\ot_{H}C_{\DOT})\ot_A (D_{\DOT}\ot_B A))\cong \bigoplus_{i+j=n}
   \coh_j(A \ot_{H} C_{\DOT})\ot_A \coh_i (D_{\DOT}\ot_B A).
$$
Again the right side is only nonzero when $i=0=j$, so that we have
$$
  \coh_0((A\ot_{H} C_{\DOT})\ot _A (D_{\DOT}\ot_B A)) ~\cong~  
  \coh_0(A\ot_{H}C_{\DOT})\ot_A
   \coh_0 (D_{\DOT}\ot_B A)~\cong~ A\ot_A A ~\cong~ A,
$$
and $\coh_n ((A\ot_{H}C_{\DOT})\ot_A (D_{\DOT}\ot_B A))=0$ for all $n>0$.
Thus we have proven that $X_{\DOT}$ is an $A^e$-free resolution of $A$.
\end{proof}

We next relate the resolution $X_{\bu}$ of $A$ (from Construction~\ref{const:X}) to the bar resolution $\mathbf{B}_{\DOT}(A)$ of $A$. 

\begin{lemma}\label{lem:maps}
There exist degree-preserving 
chain maps between $X_{\DOT}$ and the bar resolution $\mathbf{B}_{\DOT}(A)$ of $A$:
\[
\phi_{\DOT}: X_{\DOT}\longrightarrow \mathbf{B}_{\DOT}(A)
\text{ \hspace{.3in}  and  \hspace{.3in} } \psi_{\DOT}: \mathbf{B}_{\DOT}(A) \longrightarrow X_{\DOT},
\]  
such that $\psi_n \phi_n$ is the identity map on the
$A^e$-submodule $X_{0,n}$ of $X_n$, for each $n\geq 0$. 
\end{lemma}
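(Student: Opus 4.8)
The plan is to construct the two chain maps degree by degree using the standard comparison-theorem machinery, but with explicit formulas dictated by the combinatorial structure of $X_{\DOT}$ as a double complex. The key structural fact is part (a) of Theorem~\ref{thm:resolution}: $X_{i,j}\cong A\ot C_i'\ot D_j'\ot A$ as $A^e$-modules, with $C_i'=H^{\ot i}$ and $D_j'\subseteq V^{\ot j}$, so a free $A^e$-generating set of $X_n$ is indexed by tensors $x\ot y$ with $x\in H^{\ot i}$, $y\in D_j'$, $i+j=n$. Likewise $\mathbf{B}_n(A)=A^{\ot(n+2)}$ is free on $A^{\ot n}$. Since both complexes are free resolutions of the same $A^e$-module $A$, the comparison theorem guarantees chain maps $\phi_{\DOT}$, $\psi_{\DOT}$ lifting $\id_A$ in \emph{some} form; the content of the lemma is (i) that they can be chosen degree-preserving for the internal $\N$-grading (where elements of $H$ sit in degree $0$ and $V$ in degree $1$), and (ii) the normalization $\psi_n\phi_n|_{X_{0,n}}=\id$.

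For $\phi_{\DOT}$, I would define it on free generators by ``expanding'' a generator $1\ot x\ot y\ot 1\in X_{i,j}$ into the bar resolution: map $x=x^1\ot\cdots\ot x^i\in H^{\ot i}$ and $y=y^1\ot\cdots\ot y^j\in D_j'\subseteq V^{\ot j}$ to (a signed sum over shuffles of) $1\ot x^1\ot\cdots\ot x^i\ot y^1\ot\cdots\ot y^j\ot 1\in A^{\ot(n+2)}$, reflecting that $X_{\DOT}$ was built as the total complex of $(A\ot_H C_{\DOT})\ot_A(D_{\DOT}\ot_B A)$ and that both $C_{\DOT}$ (being the bar resolution of $H$) and $D_{\DOT}$ (the Koszul resolution, which embeds canonically into the bar resolution of $B$) map into bar-type complexes. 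Concretely, $\phi_0$ is the identity $A\ot A\to A\ot A$; on $X_{i,0}=A\ot H^{\ot i}\ot A$ use the canonical inclusion of the bar resolution of $H$ (tensored up); on $X_{0,j}=A\ot D_j'\ot A$ use the Koszul-into-bar embedding of Definition~\ref{def:D_.}; and on the mixed terms $X_{i,j}$ combine these with appropriate Koszul signs $(-1)^{\cdots}$ coming from the total-complex convention $d^v=(-1)^i\,\id\ot d^D$. One then checks $\delta\circ\phi=\phi\circ d^X$ by the usual telescoping of the bar differential against the horizontal and vertical differentials of $X_{\DOT,\DOT}$; this is routine once the shuffle signs are pinned down. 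Degree-preservation is automatic since each $x^\ell\in H$ is placed in a degree-$0$ slot and each $y^\ell\in V$ in a degree-$1$ slot, matching the grading on $A^{\ot(n+2)}$.

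For $\psi_{\DOT}$, I would build it inductively via the comparison theorem, but arrange the base cases so the normalization holds: set $\psi_0=\id:A\ot A\to A\ot A=X_{0,0}$, and at each stage choose a contracting homotopy of $X_{\DOT}$ to lift $\psi_{n-1}\delta_n$ across $X_n\to X_{n-1}$. The point is that $X_{0,n}$ is a direct summand (as $A^e$-module) of $X_n$ sitting ``at the $D$-end'' of the double complex, and on it $\phi_n$ restricts to the Koszul-into-bar embedding $\iota_n:D_n\ot_B A\hookrightarrow$ (bar), whose known left inverse is the standard Koszul projection $\pi_n$; so defining $\psi_n$ on the bar resolution to agree with $\pi_n$ on the relevant summand and extending by the comparison-theorem lift gives $\psi_n\phi_n|_{X_{0,n}}=\pi_n\iota_n=\id$. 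Alternatively — and this is cleaner — observe that $\phi_n|_{X_{0,n}}$ and the bar-to-Koszul map are mutually inverse \emph{up to the standard known homotopy}, and then define $\psi$ as any chain lift of $\id_A$ that restricts correctly on $X_{0,n}$; such a lift exists because $X_{0,n}\to X_n$ is split injective over $A^e$ and the restriction is already a chain map on the subcomplex $X_{0,\DOT}\cong D_{\DOT}\ot_B A$ (with the standard Koszul bar-comparison maps available). Degree-preservation of $\psi$ follows because it is built from the degree-preserving bar-to-Koszul comparison maps and the (degree-$0$) bar-to-bar comparison maps for $H$.

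\textbf{The main obstacle} I anticipate is bookkeeping: getting the shuffle/Koszul signs in $\phi_{\DOT}$ exactly right so that it is a genuine chain map with respect to the total differential $d^h+d^v$ (with its $(-1)^i$), and simultaneously ensuring that the induced formulas respect the $A$-balanced tensor identifications from Theorem~\ref{thm:resolution}(a) — i.e.\ that $\phi$ and $\psi$ are well-defined on $X_{i,j}=(A\ot_H C_i)\ot_A(D_j\ot_B A)$ and not merely on the free-module model $A\ot C_i'\ot D_j'\ot A$. The normalization $\psi_n\phi_n|_{X_{0,n}}=\id$ itself is not hard once $\phi_n|_{X_{0,n}}$ is identified with the classical Koszul$\hookrightarrow$bar embedding, since that map has a well-documented retraction; the subtlety is only in making the inductive construction of $\psi_n$ on all of $\mathbf{B}_n(A)$ compatible with that prescribed behavior on the summand coming from $X_{0,n}$, which one handles by choosing the contracting homotopy of $X_{\DOT}$ to vanish appropriately on $\phi(X_{0,n})$.
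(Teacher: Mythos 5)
Your underlying strategy is the paper's: only the restriction of $\phi_{\DOT}$ to the subcomplex $X_{0,\DOT}$ needs to be pinned down (as the canonical Koszul-into-bar embedding), the components $X_{i,j}$ with $i\geq 1$ can be handled by the comparison theorem, and $\psi_{\DOT}$ is then built inductively so that on $\phi_n(X_{0,n})$ it inverts $\phi_n$, using that $\phi_n(X_{0,n})$ is a free $A^e$-summand of $A^{\ot(n+2)}$ (choose a $k$-complement of $D_n'$ inside $A^{\ot n}$) and that $d_n(X_{0,n})\subseteq X_{0,n-1}$ makes the prescription compatible with $\psi_{n-1}\delta_n$ by induction. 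Two caveats, though. First, the explicit formula you lead with for $\phi_n$ on the mixed components --- a signed sum of shuffles of $1\ot x^1\ot\cdots\ot x^i\ot y^1\ot\cdots\ot y^j\ot 1$ --- cannot be taken literally: $A=B\# H$ is a smash product, not a tensor product of algebras, and the bar differential of $A$ produces terms like $1\ot h v\ot 1=\sum 1\ot (h_1\cdot v)\# h_2\ot 1$ whenever an $H$-slot meets a $V$-slot, so an untwisted Eilenberg--Zilber shuffle map does not commute with the differentials; a correct explicit map must insert the $H$-action as factors cross (as in the group-algebra and twisted-tensor-product literature), which is genuinely more than sign bookkeeping. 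The paper avoids this entirely by choosing $\phi_n$ on $X_{i,j}$, $i\geq 1$, as arbitrary preimages under $\delta_n$, which suffices because only $X_{0,n}$ enters the normalization. Second, once you drop the explicit formula, degree-preservation of $\phi_n$ is not ``automatic'': you need the graded version of the comparison argument --- the free generators $C_i'\ot D_j'$ are homogeneous (of degree $j$), $\phi_{n-1}d_n$ is degree-preserving, and the bar complex is exact in each internal degree, so homogeneous preimages can be chosen; similarly $\psi_n$ is chosen of degree $0$, and it is precisely this homogeneity (images of $X_{i,j}$, $i\geq 1$, sit in degrees $<n$ while $\phi_n(X_{0,n})$ is free on degree-$n$ generators) that lets you prescribe $\psi_n\phi_n=\id$ on $X_{0,n}$ without interference from the other summands. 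With those two adjustments your outline is the proof in the paper.
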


\begin{proof}
Recall by Notation~\ref{not:B,H,kappa} that $B$ is generated by the vector space $V$,
with quadratic relations $I\subseteq V\ot V$. 
First we prove by induction on $n$  that 
there are degree-preserving maps 
$\phi_n:X_n\rightarrow A^{\ot(n+2)}$
and $\psi_n:A^{\ot(n+2)} \rightarrow X_n$
commuting with the differentials.
For clarity, we denote the 
differential on the bar resolution of $A$ by $\delta$. We have the following diagram:

\[\hspace{-.7in}
\xymatrix{
X_{\DOT}:& \cdots \ar[r] &X_3 \ar[r]^{d_3} \ar@<1.ex>[d]|{\phi_3} 
&X_2 \ar[r]^{d_2} \ar@<1.ex>[d]|{\phi_2} 
&X_1 \ar[r]^{d_1} \ar@<1.ex>[d]|{\phi_1}  
&X_0 \ar[r]^{d_0} \ar@<1.ex>[d]|{\phi_0} &A \ar[r]  &0\\
\mathbf{B}_{\DOT}(A):& \cdots \ar[r] &A^{\ot 5} \ar[r]^{\delta_3} \ar@<1.ex>[u]|{\psi_3} 
&A^{\ot 4} \ar[r]^{\delta_2} \ar@<1.ex>[u]|{\psi_2} 
&A^{\ot 3} \ar[r]^{\delta_1} \ar@<1.ex>[u]|{\psi_1}  
&A\ot A \ar[r]^{\delta_0} \ar@<1.ex>[u]|{\psi_0} &A \ar[r] &0,
}
\]
\medskip

\noindent where $\mathbf{B}_n(A) = A^{\otimes (n+2)}$ and $X_n = \bigoplus_{i+j =n} X_{i,j}$, with $X_{i,j}$ defined in \eqref{xij}; see also Theorem~\ref{thm:resolution}(a).

Define $\phi_0 = \id\ot\id = \psi_0$, the identity map
from $A\ot A$ to itself.
We wish to  define $\phi_{\DOT}$
so that when restricted to $X_{0,\DOT}$, it corresponds to the standard
embedding of the Koszul complex into the bar complex:
For $n=1$, this is the embedding of $A\ot V\ot A$ into $A\ot A\ot A$ via
the containment of $V$ in $A$. 
We may define $\phi_1$ on 
$X_1 = X_{0,1}\oplus X_{1,0} \cong
(A\ot V\ot A) \oplus (A\ot H\ot A)$  by
$\phi_1(1\ot v\ot 1) = 1\ot v\ot 1$ and $\phi_1(1\ot h\ot 1)=1\ot h\ot 1$
for all $v\in V$, $h\in H$.
Note that for $n\geq 2$,
\begin{equation}\label{X0n}
   X_{0,n}\cong A\ot \left( \bigcap _{i=0}^{n-2} V^{\ot i}
   \ot I\ot V^{\ot (n-i-2)}\right) \ot A\, ,
\end{equation}
which is a free $A^e$-submodule of  $A^{\ot (n+2)}$.
For each $i,j$ with $i+j=n$, 
choose a basis of the vector space $C_i'\ot D_j'$
(whose elements are homogeneous of degree $j$,  as $H$ is declared to have degree 0).
By hypothesis, $\phi_{n-1}$ is degree-preserving, and $d_n$ is degree-preserving by construction.
So, applying 
$\phi_{n-1}d_n$ to these basis elements of $C_i' \otimes D_j'$ produces
elements of degree $j$
 in the kernel of $\delta_{n-1}$,
that is, the image of $\delta_n$.
We define $\phi_n$ by choosing (arbitrary) 
corresponding elements in the inverse image of $\Ima(\delta_n)$. 
If we start with an element in $X_{0,n}$, we may choose its image in
$A^{\ot (n+2)}$ under the canonical embedding of $X_{0,n}$
into $A^{\ot (n+2)}$ (see (\ref{X0n})). 
Given $X_{i,j}$ and $X_{i',j'}$ with $i+j=i'+j'=n$ and $i\neq 0$, $i'\neq 0$,  
elements of $X_{i,j}$ have degree $j$ and elements of $X_{i',j'}$ have degree $j'$. So their images under $\phi_n$ may be chosen
independently, and in particular, independently of those of $X_{0,n}$. 
Thus, we have the maps $\phi_n$ as desired.

Now we show that $\psi_n$ may be chosen so that $\psi_n\phi_n$ is the
identity map on $X_{0,n}$. In degree 1, we have summands 
$X_{0,1}\cong A\ot V\ot A$ and
$X_{1,0}\cong A\ot H\ot A$. 
Note that $V\oplus H$
is a direct summand of $A$ as a vector space. We may therefore define
$\psi_1(1\ot v\ot 1)= 1\ot v\ot 1$ in $X_{0,1}$ for all $v\in V$, 
and $\psi_1(1\ot h\ot 1) = 1\ot h\ot 1$ in $X_{1,0}$ for all $h\in H$. We also have that $\psi_1$ is the identity map on elements of the form $1\ot z\ot 1$, for $z$ ranging
over a basis of a 
chosen complement of $V\oplus H$ as a vector subspace of $A$. This complement
may be chosen 
arbitrarily subject to the condition that $d_1\psi_1(1\ot z\ot 1)
=\psi_0\delta_1(1\ot z\ot 1)$. 
Since $\psi_0, d_1, \delta_1$ all have degree 0 as maps, one may
also choose $\psi_1$ to have degree 0. In particular, note that 
$\psi_1\phi_1$ is the identity map on $X_{0,1}$.
Now let $n\geq 2$ and assume that $\psi_{n-2}$ and $\psi_{n-1}$ have
been defined to be degree 0 maps for which $d_{n-1}\psi_{n-1}=
\psi_{n-2}\delta_{n-1}$ and $\psi_{n-1}\phi_{n-1}$ is the identity map
on $X_{0,n-1}$. To define $\psi_{n}$,
first note that $A^{\ot (n+2)}$ contains
the space $X_{0,n}$ as an $A^e$-submodule 
(see (\ref{X0n})) and the image of each $X_{i,j}$
under $\phi_n$ ($n=i+j$, $i\geq 1$).
By construction, their images intersect in 0, the image of $X_{0,n}$
under $\phi_n$ is free, and moreover $\phi_n$ is injective on
restriction to $X_{0,n}$.  
Choose a set of free generators of $\phi_n(X_{0,n})$, 
and choose a set of free generators
of its complement in $A^{\ot (n+2)}$. 
For each chosen generator $x$ of $X_{0,n}$, we 
define $\psi_n (\phi_n (x))$ to be $x$. 
On the complement of $\phi_n(X_{0,n})$, define $\psi_n$ arbitrarily
subject to being a chain map of degree 0.  Thus, $\psi_n \phi_n$ is the identity map on $X_{0,n}$.
Now for all $x \in X_{0,n}$, since 
$d_{n}(x)\in X_{0,n-1}$, we have that
 $\psi_{n-1}\phi_{n-1}d_{n}(x)= d_{n}(x)$, by induction.  
As $\delta_n\phi_n(x)=\phi_{n-1} d_{n}(x)$,
it follows that 
$d_n\psi_n\phi_n(x) = \psi_{n-1}\delta_n \phi_n(x)$. 
So $\psi_n$ also extends the chain map from degree $n-1$ to degree $n$,
as desired.
\end{proof}


\section{Poincar\'e-Birkhoff-Witt Theorem for Hopf algebra actions} \label{sec:mainthm}

Consider the algebra $\mc{D}_{B, \kappa}$ from Notation~\ref{def:D Bkap}. The goal of this section is to prove our main result, Theorem~\ref{thm:mainintro}. We provide necessary and sufficient conditions for $\mc{D}_{B,\kappa}$ to be a PBW deformation of $B \# H$ [Definition~\ref{def:PBW}] as follows.

\begin{theorem}  \label{thm:main}
The algebra $\mc{D}_{B,\kappa}$ is a PBW deformation of $B \# H$ if and only if the following conditions hold:
\begin{enumerate}
\item $\kappa$ is $H$-invariant [Definition~\ref{def:H-inv}];
\end{enumerate}
and the following equations hold for the  maps $\kappa^C \otimes \id - \id \otimes \kappa^C$ and $\kappa^L \otimes \id - \id \otimes \kappa^L$, which are defined on the intersection $(I \otimes V) \cap (V \otimes I)$:
\begin{enumerate}
\item[(b)] ${\rm Im}(\kappa^L \otimes \id - \id \otimes \kappa^L) \subseteq I$;

\item[(c)] 
$\kappa^L \circ (\kappa^L \otimes \id - \id \otimes \kappa^L) = -(\kappa^C \otimes \id - \id \otimes \kappa^C)$;

\item[(d)] 
$\kappa^C \circ ( \id \otimes \kappa^L - \kappa^L \otimes \id) \equiv 0.$
\end{enumerate}
\end{theorem}

Recall Notation~\ref{not:B,H,kappa}: $B$ is generated by the $k$-vector space $V$
with quadratic relations $I\subset V\ot V$, so that $B=T_k(V)/( I )$.  Moreover, consider:

\begin{notation} \label{not:U,R,P} [$U$, $T_H(U)$, $R$, $P$]
Let $U := V\ot H$, which is an $H$-bimodule under the actions defined in Section~\ref{subsec:actions}.
Set  $R= I\ot H$, similarly an $H$-bimodule, and an $H$-subbimodule of $U \otimes _H U$. Let $P =\{  r\ot 1 - \kappa(r) \mid r \in I \}$ be the relation space of $\mc{D}_{B, \kappa}$,
generating an   $H$-submodule of $H\oplus U \oplus (U\ot _H U)$ in the tensor algebra:
$$T_H(U) = H \oplus U \oplus (U \otimes_H U) \oplus (U \otimes_H U \otimes_H U) \oplus \cdots.$$ Note that $U^{\otimes_H^n} \cong V^{\otimes n} \otimes H$ as $k$-vector spaces. We see that $\pi(P) = R$, where the map $\pi$ is the projection onto the homogeneous quadratic part of $P$. 
\end{notation}

Consider the following preliminary results.

\begin{lemma} \label{lem:T_H(U)} 
Since $T_H(U)$ is canonically isomorphic to $T_k(V) \# H$, we have that
$$T_H(U)/(P) ~\cong~ \mc{D}_{B,\kappa} \quad \text{and} \quad T_H(U)/(R) ~\cong~ (T_k(V) \# H) / (I) ~\cong~ B \#H,$$ where $(I)$ is identified as an ideal of $T_k(V) \# H$, generated by $I$.

Hence, $\mc{D}_{B, \kappa}$ is a PBW deformation of $B \#H$ if and only if $T_H(U)/(P)$ is a PBW deformation of $T_H(U)/(R)$. \qed
\end{lemma}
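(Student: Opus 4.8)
The plan is to manufacture a single graded algebra isomorphism $\Phi \colon T_H(U) \to T_k(V)\#H$ and then push all three displayed identifications, together with the ``Hence'', through it. First I would build $\Phi$ from the vector-space identifications $U^{\otimes_H n}\cong V^{\otimes n}\ot H$ recorded in Notation~\ref{not:U,R,P}, declaring the $n$-th homogeneous component of $T_H(U)$ to map onto the degree-$n$ piece $(T_k(V))_n \ot H = V^{\otimes n}\ot H$ of $T_k(V)\#H$ (with $H$ placed in degree $0$ on both sides, so that $\Phi$ is graded by construction). The one thing that needs checking is multiplicativity of $\Phi$. This reduces, by associativity and generation in degrees $0$ and $1$, to comparing products of elements of $U = V\ot H$. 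Writing $v\ot \ell=(v\ot 1_H)\cdot \ell$ and using the right $H$-action to slide $\ell$ across the balanced tensor via the left action $\ell\cdot(v'\ot \ell')=\sum(\ell_1\cdot v')\ot \ell_2\ell'$ [Section~\ref{subsec:actions}], one finds
\[
(v\ot\ell)\otimes_H(v'\ot\ell')=(v\ot 1_H)\otimes_H\!\sum (\ell_1\cdot v')\ot\ell_2\ell' \;\longmapsto\; \sum v\ot(\ell_1\cdot v')\ot\ell_2\ell',
\]
which is exactly $(v\#\ell)(v'\#\ell')=\sum v(\ell_1\cdot v')\#\ell_2\ell'$ in $T_k(V)\#H$. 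Hence $\Phi$ is the desired graded algebra isomorphism.

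Second, I would identify the relevant ideals under $\Phi$. From the degree-$0$ and degree-$1$ components of $\Phi$ we get $\Phi(\kappa^C(r))=\kappa^C(r)$ and $\Phi(\kappa^L(r))=\kappa^L(r)$, and from the degree-$2$ component $\Phi(r\ot 1_H)=r\#1$; therefore $\Phi(r\ot 1 - \kappa(r)) = r-\kappa(r)$, so $\Phi$ carries the generating set $P$ of the ideal $(P)$ onto the generating set $\{r-\kappa(r)\}_{r\in I}$, yielding $T_H(U)/(P)\cong \mc{D}_{B,\kappa}$. Likewise $\Phi(R)=\Phi(I\ot H)=\{r\#h : r\in I,\ h\in H\}$, and since $r\#h=(r\#1)(1\#h)$ this set generates the same two-sided ideal of $T_k(V)\#H$ as $\{r\#1 : r\in I\}$, namely $(I)$. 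To obtain $(T_k(V)\#H)/(I)\cong B\#H$, I would invoke the hypothesis of Notation~\ref{not:B,H,kappa}(ii) that the $H$-action preserves $I$, hence preserves the ideal of $T_k(V)$ it generates: this forces $(I)$, as a two-sided ideal of the smash product, to equal $(I)_{T_k(V)}\ot H$, so the quotient is $(T_k(V)/(I))\#H = B\#H$.

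Finally, the ``Hence'' is formal once these isomorphisms are in place. The filtration on $\mc{D}_{B,\kappa}=T_H(U)/(P)$ and the grading on $B\#H=T_H(U)/(R)$ are both induced from the grading on $T_H(U)$, and since $\Phi$ is a graded isomorphism carrying $(P)$ (a filtered ideal) to $(r-\kappa(r))$ and $(R)$ (a graded ideal) to $(I)$, the induced maps are a filtered isomorphism $T_H(U)/(P)\cong\mc{D}_{B,\kappa}$ and a graded isomorphism $T_H(U)/(R)\cong B\#H$. Because passing to the associated graded algebra commutes with filtered isomorphisms, $\gr(T_H(U)/(P))\cong \gr_F\mc{D}_{B,\kappa}$; combined with $T_H(U)/(R)\cong B\#H$, this shows $\gr(T_H(U)/(P))\cong T_H(U)/(R)$ if and only if $\gr_F\mc{D}_{B,\kappa}\cong B\#H$, i.e.\ one is a PBW deformation exactly when the other is. The only genuinely substantive point is the multiplicativity of $\Phi$ in the first paragraph --- equivalently, the fact that the $H$-balanced tensor algebra $T_H(V\ot H)$ reproduces the smash-product multiplication --- while the rest is bookkeeping about ideals and gradings.
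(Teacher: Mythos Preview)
Your proposal is correct. Note that the paper gives no proof at all: the \qed\ is placed immediately after the statement, signaling that the authors regard the lemma as an immediate consequence of the identifications already recorded in Notation~\ref{not:U,R,P} (in particular $U^{\otimes_H n}\cong V^{\otimes n}\ot H$ and $T_H(U)\cong T_k(V)\#H$). Your write-up simply unpacks what the authors leave implicit---the multiplicativity of $\Phi$ via the smash-product formula, the matching of the generating sets $P$ and $R$ with $\{r-\kappa(r)\}$ and $(I)$, and the fact that $H$-stability of $I$ forces the smash-product ideal $(I)$ to be $(I)_{T_k(V)}\ot H$---so there is no meaningful difference in approach, only in level of detail.
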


\begin{lemma} \cite[Lemma~5.2]{SW}  \label{lem:alphabeta}
If $T_H(U)/(P)$ is a PBW deformation of $T_H(U)/(R)$, then the following conditions hold for maps $\alpha: R \rightarrow U$ and $\beta: R \rightarrow H$ for which $P = \{x - \alpha(x) - \beta(x) ~|~ x \in R\}$:
\begin{enumerate}
\item[(i)] ${\rm Im} (\alpha \otimes_H \id - \id \otimes_H \alpha) \subseteq R$;

\item[(ii)] 
$\alpha \circ (\alpha \otimes_H \id - \id \otimes_H \alpha) = -(\beta \otimes_H \id - \id \otimes_H \beta)$;

\item[(iii)] 
$\beta \circ ( \id \otimes_H \alpha - \alpha \otimes_H \id) \equiv 0.$
\end{enumerate}
Here, the maps $\alpha \ot_H \id - \id \ot_H \alpha$ and $\beta \ot_H \id - \id \ot_H \beta$ are defined on the subspace $(R \otimes_H U) \cap (U \otimes_H R)$ of $T_H(U)$. \qed 
\end{lemma}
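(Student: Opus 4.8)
\textbf{Proof plan for Lemma~\ref{lem:alphabeta}.}
The plan is to reduce the statement to the classical Braverman--Gaitsgory criterion for PBW deformations of quadratic algebras, applied over the ground ring $H$ rather than over a field. First I would recall that $T_H(U)/(R)$ is a Koszul algebra over the semisimple-in-the-relevant-sense base; more precisely, since $B=T_k(V)/(I)$ is Koszul and $T_H(U)\cong T_k(V)\#H$ with $U^{\otimes_H n}\cong V^{\otimes n}\otimes H$, the quadratic algebra $T_H(U)/(R)$ is obtained from the Koszul algebra $B$ by the flat base change $-\otimes_k H$ (on the left) together with the smash-product twist, and hence is itself Koszul as an $H$-ring. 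This is the input that lets one run the Braverman--Gaitsgory machinery: the obstructions to PBW-ness live in the degree-three part of a Koszul-type complex, and Koszulness guarantees there are no higher obstructions beyond the ones recorded in (i)--(iii).

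The main steps, in order, are: (1) write the relation space of the filtered algebra as $P=\{x-\alpha(x)-\beta(x)\mid x\in R\}$, noting that such maps $\alpha\colon R\to U$ and $\beta\colon R\to H$ exist and are uniquely determined once one knows $T_H(U)/(P)$ has the PBW property, because the PBW property forces the image of each quadratic relation under the Poincar\'e--Birkhoff--Witt section to have a well-defined linear and constant component (this is exactly the $H$-linear analogue of the splitting $U\oplus H$ inside $T_H(U)$ in low degrees, and it uses that $H=A_0$ is a direct summand). (2) Impose associativity of the multiplication on $T_H(U)/(P)$ in the lowest nontrivial overlap degree, namely degree three: the space of degree-three consequences of the relations from the left equals $R\otimes_H U$, from the right equals $U\otimes_H R$, and the PBW property forces the two resulting rewriting procedures to agree on $(R\otimes_H U)\cap(U\otimes_H R)$. (3) Decompose that agreement by homogeneous degree in the original grading (degrees $3,2,1$ coming respectively from the purely quadratic-times-linear term, the mixed term, and the constant term of $x-\alpha(x)-\beta(x)$), and read off the three identities: the degree-$2$ component gives (i), the degree-$1$ component gives (ii), and the degree-$0$ component gives (iii). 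Since this is a citation to \cite[Lemma~5.2]{SW}, I would present it as an application of that result, verifying only that the hypotheses there (Koszulness of $T_H(U)/(R)$ over $H$, existence of the section) are met in our setting.

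The step I expect to be the main obstacle is justifying that the Braverman--Gaitsgory argument goes through verbatim with the field $k$ replaced by the (noncommutative, non-semisimple) base ring $H$. The delicate points are: that $U$ is an $H$-\emph{bimodule} and all tensor products are over $H$ on both sides, so one must check that the maps $\alpha\otimes_H\id-\id\otimes_H\alpha$ and $\beta\otimes_H\id-\id\otimes_H\beta$ are well-defined on $(R\otimes_H U)\cap(U\otimes_H R)$ (this is where one uses that $R=I\otimes H$ is an $H$-sub-bimodule and that $\alpha,\beta$ are $H$-bimodule maps, which in turn follows from $H$-invariance of $\kappa$); and that the relevant Koszul-type complex computing obstructions remains exact after the base change, which is where flatness of $H$ over $k$ and Koszulness of $B$ enter. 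Once these structural facts are in place, the extraction of (i)--(iii) from the degree-three associativity constraint is a formal bookkeeping exercise, identical in shape to the field case.
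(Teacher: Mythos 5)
Your core argument is right, and it is essentially the argument behind the result the paper simply cites: the paper gives no proof of Lemma~\ref{lem:alphabeta} beyond the reference to \cite[Lemma~5.2]{SW}, and your steps (2)--(3) --- rewriting an element of $(R\otimes_H U)\cap(U\otimes_H R)$ in the filtered algebra two ways, observing that the discrepancy lies in $F_2\cap(P)$, and matching the components of degrees $2$, $1$, $0$ to obtain (i), (ii), (iii) --- is exactly that low-degree filtration computation. Two caveats on your framing. First, the Koszulness and flat-base-change scaffolding in your step (1) is not needed here: conditions (i)--(iii) constitute the \emph{necessity} half, extracted purely from the PBW property in filtered degrees $\leq 3$ (PBW forces $F_2\cap(P)$ to consist of $H$-bimodule combinations of the elements $x-\alpha(x)-\beta(x)$, $x\in R$); Koszulness of $B$ is what rules out higher obstructions in the \emph{converse} direction, which the paper handles separately via the resolution $X_{\bullet}$ of Section~\ref{sec:resolution}. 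So presenting Koszulness of $T_H(U)/(R)$ over $H$ as a hypothesis to be verified for this lemma misplaces where that input is used, though it does no harm. Second, your justification that the maps $\alpha\otimes_H\id-\id\otimes_H\alpha$ and $\beta\otimes_H\id-\id\otimes_H\beta$ are well defined ``because $\alpha,\beta$ are bimodule maps, which follows from $H$-invariance of $\kappa$'' is circular as stated, since $H$-invariance is not among the lemma's hypotheses. Either use the identification $(R\otimes_H U)\cap(U\otimes_H R)\cong\bigl((I\otimes V)\cap(V\otimes I)\bigr)\otimes H$, under which the maps are just $\kappa^L$ and $\kappa^C$ applied in the indicated tensor slot and tensored with $\id_H$ (right $H$-linearity is automatic from Remark~\ref{rem:alphakappa}, and nothing more is needed to define them this way), or note that in the proof of Theorem~\ref{thm:main} condition (a) is derived from the PBW property \emph{before} the lemma is invoked, so the bimodule property is available at the point of application. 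Also, the uniqueness of $\alpha$ and $\beta$ plays no role: they are given data determined by $\kappa$ via Notation~\ref{not:U,R,P}.
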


\begin{remark} \label{rem:alphakappa}
Given the maps $\kappa^L: I \rightarrow V \otimes H$ and $\kappa^C:I \rightarrow H$ as in Notation~\ref{not:B,H,kappa}, we see that $\alpha = \kappa^L \otimes \id_H$ and $\beta= \kappa^C \otimes \id_H$.
\end{remark}

\begin{lemma}  \label{lem:T/P_t}
Consider the algebra 
$$(T_H(U)/(P))_t := \frac{T_H(U)[t]}{(x - \alpha(x)t - \beta(x)t^2)_{x \in R}}.$$
We have that $(T_H(U)/(P))_t$ is a PBW deformation of $T_H(U)/(R)$ over $k[t]$ if and only if $\mc{D}_{B,\kappa,t}$ (of Proposition~\ref{prop:doa equiv}) is a PBW deformation of $B\# H$ over $k[t]$. \qed
\end{lemma}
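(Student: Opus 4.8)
The statement is an ``equivalence of two deformation problems'' assertion, and the plan is to deduce it essentially for free from the translation machinery already in place, rather than to re-derive anything. The key observation is that by Lemma~\ref{lem:T_H(U)} we have a canonical isomorphism of filtered algebras $T_H(U) \cong T_k(V)\#H$ carrying the generating relation space $P = \{r\ot 1 - \kappa(r)\mid r\in I\}$ to $\{r - \kappa(r)\mid r\in I\}$, and by Remark~\ref{rem:alphakappa} we have $\alpha = \kappa^L\ot\id_H$ and $\beta = \kappa^C\ot\id_H$ under the identification $U^{\ot_H n}\cong V^{\ot n}\ot H$. First I would spell out that this same isomorphism, extended $k[t]$-linearly to $T_H(U)[t] \cong (T_k(V)\#H)[t]$, carries the ideal $(x - \alpha(x)t - \beta(x)t^2)_{x\in R}$ precisely to the ideal $(r - \kappa^L(r)t - \kappa^C(r)t^2)_{r\in I}$: indeed a typical generator $x = r\ot 1 - 0 \in R = I\ot H$ with $x\in R$ corresponds to $r\in I$, and $\alpha(x) = (\kappa^L\ot\id_H)(r\ot 1) = \kappa^L(r)$, $\beta(x) = \kappa^C(r)$, so $x - \alpha(x)t - \beta(x)t^2 \mapsto r - \kappa^L(r)t - \kappa^C(r)t^2$. (Here I should note that $R$ is generated as an $H$-bimodule by $I\ot 1$, so it suffices to check on those generators; the ideal generated is unchanged.) Hence there is a $k[t]$-algebra isomorphism
$$\bigl(T_H(U)/(P)\bigr)_t \;\cong\; \mc{D}_{B,\kappa,t},$$
where the right side is the algebra of Proposition~\ref{prop:doa equiv}.

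Next I would record that under this isomorphism the two base algebras being deformed also correspond: setting $t=0$ in $\bigl(T_H(U)/(P)\bigr)_t$ gives $T_H(U)/(R)$, which by Lemma~\ref{lem:T_H(U)} is isomorphic to $B\#H$, compatibly with the identification above. Moreover the isomorphism is graded once we assign $H$ degree $0$, $V$ degree $1$, and $\deg t = 1$, since $\kappa^L$ contributes in degree $1$ and $\kappa^C$ in degree $2$, exactly matching the $t$ and $t^2$ weights; this is what makes ``graded deformation over $k[t]$'' on one side equivalent to the same notion on the other. Being a graded deformation of an $\N$-graded algebra $A$ over $k[t]$ is, by Definition~\ref{def:deformation}, the conjunction of three conditions --- associativity (automatic here, as both are quotients of an associative algebra), the vector-space isomorphism with $A[t]$, and gradedness with $\deg t = 1$ --- and all three are visibly transported by a graded $k[t]$-algebra isomorphism that carries $A$ to $A$. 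Therefore $\bigl(T_H(U)/(P)\bigr)_t$ is a graded deformation of $T_H(U)/(R)$ over $k[t]$ if and only if $\mc{D}_{B,\kappa,t}$ is a graded deformation of $B\#H$ over $k[t]$, which is the claim.

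The only genuine content to verify carefully is the identification of the two ideals, i.e.\ that the canonical isomorphism $T_H(U)\cong T_k(V)\#H$ of Lemma~\ref{lem:T_H(U)} respects the $t$-graded relations, so I expect that bookkeeping step --- tracking how $\alpha,\beta$ correspond to $\kappa^L,\kappa^C$ through the identifications $U^{\ot_H n}\cong V^{\ot n}\ot H$, and confirming the grading conventions line up --- to be the main (and only mildly delicate) obstacle; everything else is formal. In writing it up I would keep the argument to one short paragraph, since the heavy lifting is done by Lemma~\ref{lem:T_H(U)}, Remark~\ref{rem:alphakappa}, and the definition of $\mc{D}_{B,\kappa,t}$ in Proposition~\ref{prop:doa equiv}.
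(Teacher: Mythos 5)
Your proposal is correct and follows essentially the same route as the paper, whose proof of this lemma is simply the one-line observation that it follows from Lemma~\ref{lem:T_H(U)} and Remark~\ref{rem:alphakappa}; you have merely made explicit the bookkeeping (transport of the ideals under the $k[t]$-linear extension of the isomorphism $T_H(U)\cong T_k(V)\#H$, and the matching of gradings) that the paper leaves implicit.
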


\begin{proof}
This follows from 
Lemma~\ref{lem:T_H(U)} and Remark~\ref{rem:alphakappa}.
\end{proof}

Now we provide the proof of Theorem~\ref{thm:main}.
A somewhat shorter proof would suffice in case $H$ is semisimple:
The first proof of \cite[Theorem 3.1]{SW:DOA} may be generalized from semisimple group algebras  to  semisimple Hopf algebras. 
In that context, one has on hand a much smaller resolution than that which we
will use below.

\medskip

\noindent {\it Proof of Theorem~\ref{thm:main}}. 
Note that we will employ the identifications given in the lemmas and remark above, sometimes without comment. Namely, results from Section~\ref{sec:resolution} will be used here where, for instance, $I$ is identified 
so that $R= I \ot H$, and $B\# H$ is identified with $T_H(U)/(R)$.
\medskip

\noindent \underline{Necessity of conditions (a)--(d).}
Let us first show that conditions (a)-(d) are necessary.  Assume that $\mc{D}_{B, \kappa}$ is a PBW deformation of $B\# H$ and take $Q$ to be the relation space of $\mc{D}_{B,\kappa}$. Then, for all $h\in H$ and $r\in I$, we have that $h \cdot r - h \cdot (\kappa (r)) \in Q$. (Refer to Section~\ref{subsec:actions} for the definition of these actions.) 
We also have that $h \cdot r -  \kappa (h \cdot r) \in Q$, so $h \cdot (\kappa(r)) - \kappa(h \cdot r)\in Q$. This implies that $h\cdot (\kappa(r)) = \kappa(h\cdot r)$ in $\mc{D}_{B, \kappa}$ since $Q$ cannot contain nonzero elements in degree less than two. Thus, condition (a) holds. 
Moreover, by Lemma~\ref{lem:T_H(U)}, $T_H(U)/(P)$ satisfies the PBW property.

Now by applying Lemma~\ref{lem:alphabeta}, we see that conditions (i),(ii),(iii) hold for $T_H(U)/(P)$. These conditions are equivalent to conditions (b),(c),(d) in Theorem~\ref{thm:main} for the algebra $\mc{D}_{B,\kappa}$ by Notation~\ref{not:U,R,P} and Remark~\ref{rem:alphakappa}. Thus, if $\mc{D}_{B, \kappa}$ is a PBW deformation of $B\# H$, then conditions (a)-(d) of Theorem~\ref{thm:main} must hold.  

\medskip

\noindent\underline{Sufficiency of conditions (a)--(d).} Conversely, let us assume that conditions (a)-(d) of Theorem~\ref{thm:main} hold for the algebra $\mc{D}_{B,\kappa}$. Equivalently by Notation~\ref{not:U,R,P}, Lemma~\ref{lem:T_H(U)}, and Remark~\ref{rem:alphakappa}, we assume the following statements for the algebra $T_H(U)/(P)$:
\begin{itemize}
\item[*] the maps $\alpha$ and $\beta$ are $H$-invariant; and 
\item[*] conditions (i),(ii),(iii) of Lemma~\ref{lem:alphabeta} hold.
\end{itemize}
The goal is to show that $\mc{D}_{B,\kappa}$ is a PBW deformation of $B\#H$ which, by Proposition~\ref{prop:doa equiv}, is equivalent to showing that $\mc{D}_{B,\kappa,t}$ (of Proposition~\ref{prop:doa equiv}) is a graded deformation of $B\# H$ over $k[t]$. Hence, by Lemma~\ref{lem:T/P_t}, the goal is then equivalent to verifying that the algebra $(T_H(U)/(P))_t$ is a graded deformation of $T_H(U)/(R)$ over $k[t]$. We thus have the following strategy:

\begin{itemize} 
\item[-] Let $A$ denote $T_H(U)/(R)$. 
\item[-] Construct multiplication maps, $\mu_i: A \otimes A \rightarrow A$, as in Definition~\ref{def:deformation}, subject to the restraints listed in Remark~\ref{rem:mu}. 
\item[-] Form the graded deformation $A_t$  of $A$ as in Definition~\ref{def:deformation}. 
\item[-] Conclude that $A' := T_H(U)/(P) \cong (A_t)|_{t=1}$ is a PBW deformation of $A$ by Proposition~\ref{prop:doa equiv}.
\end{itemize}

We generalize the proof in \cite{SW} from group actions to Hopf algebra actions. Namely, we use the free resolution $X_{\DOT}$ of the $A^e$-module $A$ in Construction~\ref{const:X} to define the maps $\mu_i$. Recall that $X_{\DOT}$ is constructed from $C_{\DOT}= \mathbf{B}_{\DOT}(A)$, the bar resolution of $H$,
and from $D_{\DOT}$, the Koszul resolution of $B$. 
\medskip 

\noindent {\it Extending $\alpha$ and $\beta$ to be maps on $X_{\DOT}$}. 

We first extend $\alpha$ and $\beta$ to be maps on $X_{\DOT}$ as follows.
In degree 2, $X_2$ contains as a direct summand
$X_{0,2}\cong A\ot I\ot A$; see \eqref{X0n}.
As $\alpha,\beta$ are $H$-bilinear by $H$-invariance, we may extend them to $A^e$-module maps
from $A\ot R\ot _H A\cong A\ot I\ot A$ to $A$
by composing with the multiplication map. 
By {\it abuse of notation}, denote these extended maps by $\alpha, \beta$ as well. 
Extend $\alpha$ and $\beta$ yet further by setting them equal to  
0 on the summands $X_{2,0}$ and $X_{1,1}$ of $X_2$
so that  they become maps $\alpha,\beta: X_2 \rightarrow A$. More precisely, $\alpha, \beta \in \Hom_{A^e}(X_2, A) \cong \Hom_k(X_2', A)$ for $X_2 \cong A \ot X_2' \ot A$.

\medskip

\noindent {\it Construction of the multiplication map $\mu_1$}. 

To build $\mu_1 \in \Hom_k(A\ot A, A) \cong \Hom_{A^e}(A^{\ot 4}, A)$,  recall that it must
be a Hochschild 2-cocycle as in~\eqref{eq:mu1}. We will show that $\alpha: X_2 \rightarrow A$ is a Hochschild 2-cocycle on $X_{\DOT}$, that is, $d_3^*(\alpha) =0$. Recall the chain maps of Lemma~\ref{lem:maps}. 
We set $\mu_1 = \psi_2^*(\alpha)$, which will be a Hochschild 2-cocycle on $B_{\DOT}(A)$, that is, $\delta_3^*(\mu_1)=0$.

To show that $d_3^*(\alpha): X_3 \rightarrow A$ is the zero map, first note that $X_3 = X_{0,3} \oplus X_{1,2} \oplus X_{2,1} \oplus X_{3,0}$ from \eqref{resolution-X} and that the images of $X_{2,1}$ and $X_{3,0}$ under $d_3$ lie in $X_{1,1} \oplus X_{2,0}$. Since $\alpha|_{X_{1,1} \oplus X_{2,0}} \equiv 0$ by the extension above, it suffices to show that $d_3^*(\alpha)|_{X_{0,3}}$ and $d_3^*(\alpha)|_{X_{1,2}}$ are zero maps.

Rewriting condition (i) of Lemma~\ref{lem:alphabeta}, we see that it implies that  $\alpha$ is 0 on the image of the differential on
$X_{0,3}$ as follows. 
Let $\sum_i 1\ot x_i\ot y_i\ot z_i\ot 1 \in A\ot ((I\ot V)\cap (V\ot I))\ot A=X_{0,3}$; see \eqref{X0n}. Then

\[
 \alpha \left(d_3\left(\sum_i 1\ot x_i\ot y_i\ot z_i\ot 1 \right) \right) \hspace{4in}
\]
\[
\hspace{.7in} = \alpha \left(\sum_i  x_i\ot y_i\ot z_i\ot 1 - \sum_i 1 \ot x_i y_i \ot z_i \ot 1 + 
\sum_i 1\ot x_i \ot y_i z_i \ot 1 -\sum_i 1 \ot x_i \ot y_i \ot z_i 
    \right)  
\]
\[ 
   = \sum_i (x_i\alpha(y_i\ot z_i)  -0 +0
   -\alpha(x_i\ot y_i)z_i). \hspace{2.48in}
\]
(To see this, note that applying the multiplication map of $A$ to 
elements in $I$ yields 0.)
Thus $d_3^*(\alpha) = \id\ot\alpha - \alpha\ot\id$ on $X_{0,3}$; here, we identify $\id\ot\alpha - \alpha\ot\id$ with $m \circ (\id\ot\alpha - \alpha\ot\id)$ where $m$ is  the
multiplication map on $A$. We see that condition (i)  indeed implies (in fact, is equivalent to) 
$d^*_3(\alpha)|_{X_{0,3}}\equiv 0$.

Next, we claim that $\alpha$ being $H$-invariant implies that $\alpha$
is also 0 on the image of the differential on $X_{1,2}$. 
Let $a,b\in A$, $h\in H$, and $r\in I$, and
consider $a\ot h\ot r\ot b$ as an element of $X_{1,2}\cong A\ot H\ot I\ot A$ by Theorem~\ref{thm:resolution}(a).
By the definition of the differential on $X_{1,2}$,
\begin{eqnarray*}
   d(a\ot h\ot r\ot b) & = & d((a\ot h\ot 1)\ot (1\ot r\ot b))\\
    &=& d(a\ot h\ot 1)\ot (1\ot r\ot b) - (a\ot h\ot 1)\ot d(1\ot r\ot b).
\end{eqnarray*}
The second term lies in $X_{1,1}$, but 
$\alpha$ is 0 on $X_{1,1}$ by definition. 
Therefore 
\begin{eqnarray} \label{eq:alphacocycle}
\begin{array}{rl}
  \alpha(d(a\ot h\ot r\ot  b)) 
    &= \alpha((ah\ot 1 - a\ot h)\ot (1\ot r\ot b))\\
    &= \alpha\left(ah\ot r\ot b - \sum a\ot  (h_1 \cdot r) \ot h_2b\right)\\
   &= ah\alpha(r) b - \sum a \alpha(h_1 \cdot r) h_2b.
\end{array}
\end{eqnarray}
Since $\alpha$ is $H$-invariant (*), we have that 
$$h\alpha(r) = \sum h_1 \epsilon(h_2) \alpha(r) = \sum h_1 \alpha(r) \epsilon(h_2)= \sum h_1 \alpha(r) S(h_2) h_3 \overset{(*)}{=} \sum \alpha(h_1 \cdot r) h_2.$$
Thus, $\alpha$ is zero on the image of $d=d_3$ on $X_{1,2}$ by \eqref{eq:alphacocycle}. 
It follows that $\alpha$ is a Hochschild 2-cocycle on $X_{\DOT}$.

Now, let $\mu_1=\psi_2^*(\alpha)$, where $\psi_{\DOT}$ is
a chain map satisfying the conditions of Lemma~\ref{lem:maps}.
We conclude that
$$\delta_3^*(\mu_1) = \delta_3^*(\psi_2^*(\alpha)) = \psi_3^*(d_3^*(\alpha)) \equiv 0,$$
as desired.
So, we have a first level graded deformation $A_{(1)}$ of $A$ with 
first multiplication map $\mu_1: A\ot A\rightarrow A$. 
\smallskip

As an aside, we also get that $\phi_2^*(\mu_1)=\alpha$ as cochains. To see this, first note that since $\alpha$ is homogeneous of degree $ - 1$ by its definition,
so is $\mu_1$. 
Let $x\in X_{0,2}$. 
By Lemma~\ref{lem:maps}, $\psi_2\phi_2(x) = x$, and thus
$$
    \mu_1\phi_2(x) = \alpha\psi_2\phi_2(x) = \alpha(x).
$$
Now  let $y$ be a free generator of
$X_{1,1}$ or of $X_{2,0}$, which may always be chosen to have degree 1 or 0, respectively.
Then $\psi_2\phi_2(y)$ has respectively degree 1 or 0, implying that
its component in $X_{0,2}$ is 0. It follows that 
$\mu_1\phi_2(y)= \alpha\psi_2\phi_2 (y) = 0 = \alpha(y)$; the last equation follows from the extension of $\alpha$ to $X_{\DOT}$. Therefore $\phi_2^*(\mu_1)=\alpha$.

\medskip

\noindent {\it Construction of the multiplication map $\mu_2$}. 

Given $\mu_1$ as above, note that the map $\mu_2$ must satisfy \eqref{eq:mu2}, that is, $\delta_3^*(\mu_2) = \mu_1 \circ (\mu_1 \ot \id - \id \ot \mu_1)$ as cochains on the bar resolution $\mathbf{B}_{\DOT}(A)$ of $A$. We will show that a modification of $\psi_2^*(\beta)$ is such a map as follows.

First, note that $\beta = \phi_2^*(\psi_2^*(\beta))$ as cochains by a similar argument to that above for $\alpha$.
Moreover, condition~(ii) implies that 
$d_3^*(\beta)=\alpha\circ (\alpha\ot_H \id - \id\ot_H \alpha)$
as cochains on $X_{0,3}$. 
Let 
\begin{equation} \label{eq:gamma}
\gamma = \delta_3^* \psi_2^*(\beta) - \mu_1\circ (\mu_1\ot\id - \id\ot\mu_1).
\end{equation}
Then $\phi_3^*(\gamma)$ is zero on  $X_{0,3}$:
$\phi_3^* \delta_3^* \psi_2^*(\beta) = d_3^*(\beta)$ and $\phi_3^*(\mu_1 \circ (\mu_1 \ot \id - \id \ot \mu_1)) = \alpha \circ (\alpha \ot \id - \id \ot \alpha)$
by Lemma~\ref{lem:alphabeta}(ii).
To see the last statement, note that the image of $\phi_3$ on $X_{0,3}$ is contained in $A\ot ((I\ot V) \cap (V\ot I))\ot A$ with $\phi^*(\mu_1)=\alpha$.
We also see that $\phi_3^*(\gamma)$ is 0 on $X_{2,1}$ and on $X_{3,0}$
since it is a map of degree $ - 2$. 
We claim  it is also 0 on $X_{1,2}$ as follows. As an 
$A^e$-module, the image of $X_{1,2}$ under $\phi_3$ is generated
by elements of degree 2.
Since $\mu_1=\psi_2^*(\alpha)$, it is zero on elements of degree less than two, and so 
the map $\mu_1\circ (\mu_1\ot\id - \id\ot \mu_1)$ 
must be 0 on the image of $X_{1,2}$ under $\phi_3$.
Since $\beta$ is $H$-invariant so that $\beta$ is a cocycle (see the argument following \eqref{eq:alphacocycle})
 and $\phi_2^* \psi_2^*(\beta)=\beta$, 
we have that $\phi_3^*\delta_3^*\psi_2^*(\beta)=d_3^*\phi_2^*\psi^*_2(\beta)=d_3^*(\beta)$ 
is 0 on $X_{1,2}$. 
Therefore $\phi_3^*(\gamma)$ is 0 on $X_{1,2}$. 

We have shown that $\phi_3^*(\gamma)$ is 0 on all of $X_3$, and so 
$\gamma$ must be a coboundary on the bar resolution $\mathbf{B}_{\DOT}(A)$ of $A$.
Thus there is a 2-cochain $\mu$ of degree $-2$ on the bar resolution  with 
\begin{equation} \label{eq:deltagamma}
\delta_3^* (\mu) =\gamma.
\end{equation}
Consider $\psi_2^*(\beta) - \mu$, yet note that $\phi_2^*(\psi_2^*(\beta)-\mu)$  may not agree with $\beta$ on $X_2$. We need such a statement for the next step of constructing $\mu_3$. 
Now 
$$
  d_3^*\phi_2^*(\mu) = \phi_3^*\delta_3^*(\mu)=\phi_3^*(\gamma) = 0,
$$ so the 2-cochain $\phi_2^*(\mu )$ is a {\em cocycle} on the complex 
$X_{\DOT}$. Thus, $\phi_2^*(\mu)$
lifts to a {\em cocycle} $\mu'$ of degree $-2$ 
on the bar complex $\mathbf{B}_{\DOT}(A)$. 
In other words, $\phi_2^*(\psi_2^*(\beta)-\mu+\mu')$ agrees with $\beta$ on $X_2$. 

Moreover,  $\delta_3^*(\mu') = 0$, and by \eqref{eq:gamma} and \eqref{eq:deltagamma}, we have that $\delta_3^* (\psi_2^*(\beta) - \mu) =
   \mu_1\circ (\mu_1\ot\id - \id\ot\mu_1)$. So,
\begin{equation} \label{eq:delta3*psi2}
   \delta_3^* (\psi_2^*(\beta)-\mu +\mu')-\mu_1\circ (\mu_1\ot\id - \id\ot\mu_1) = 0
\end{equation}
on the bar resolution $\mathbf{B}_{\DOT}(A)$ of $A$.

Thus, set $\mu_2$ equal to $\psi_2^*(\beta)-\mu+\mu'$ and we have maps $\mu_1, \mu_2$ 
to obtain a second level graded deformation $A_{(2)}$ of $A$ extending $A_{(1)}$.

\medskip

\noindent {\it Construction of the multiplication map  $\mu_3$}. 

Recall the restraint on $\mu_3$ given in \eqref{eq:mui}, that is,  
$\mu_3$ is a cochain on $\mathbf{B}_{\DOT}(A)$ whose coboundary is given by $\mu_1 \circ(\mu_2 \ot \id - \id \ot \mu_2) + \mu_2 \circ (\mu_1 \ot \id - \id \ot \mu_1)$. We construct $\mu_3$ as follows.

By \eqref{eq:delta3*psi2} and condition (iii) of Lemma~\ref{lem:alphabeta}, we have that $\mu_2 \circ(\mu_1 \ot \id - \id \ot \mu_1)$ is 0 on the image of $\phi$. 
By degree considerations,
$\mu_1 \circ (\mu_2 \ot \id - \id \ot \mu_2)$ is always 0 on the image of $\phi$. 
Therefore, the obstruction
$$
  \mu_2\circ (\mu_1\ot\id - \id\ot \mu_1) + \mu_1\circ (\mu_2\ot \id
   -\id \ot \mu_2)$$
is a coboundary.
Thus there exists a 2-cochain $\mu_3$, necessarily having  degree $-3$, 
satisfying the restraint given above, and the deformation lifts to the third level.

\medskip

\noindent {\it Construction of the multiplication maps $\mu_i$ for $i \geq 4$}. 

The obstruction for a third level graded deformation $A_{(3)}$ of $A$
to lift to the fourth level lies in $\HH^{3, -4}(A)$ by Proposition~\ref{prop:lifting}.
We apply $\phi_3^*$ to this obstruction  to obtain
a cochain on $X_3$. Since there are no cochains of degree $-4$
on $X_3$ by definition (as it is generated by elements of
degree 3 or less), $\phi_3^*$ applied to the obstruction is automatically zero. Therefore, the deformation may be continued
to the fourth level. Similar arguments show that it can be continued
to the fifth level, and so on. 
\medskip

\noindent {\it Construction of $A_t$}. 

Let $A_t$ be the graded deformation of $A$ that we obtain in this manner [Definition~\ref{def:deformation}].
Then, $A_t$ is the $k$-vector space $A [t]$
with multiplication, for all $a_1, a_2\in A$, 
$$
  a_1*a_2 = a_1a_2 + \mu_1(a_1\ot a_2) t + \mu_2(a_1\ot a_2)t^2 + 
\mu_3(a_1\ot a_2) t^3 + \ldots \ , 
$$
where $a_1 a_2$ is the product in $A$
and each $\mu_i:A\otimes A \rightarrow A$ is a $k$-linear map
of homogeneous degree $-i$.
Now for any $r$ in $R$,
$\mu_1(r)=(\psi_2^*\alpha)(r)$ and $\mu_2(r)=(\psi_2^*(\beta)-\mu+\mu')(r)$ by construction,
and $\mu_i(r)=0$ for $i\geq 3$ since $\deg(r)=2$.

\medskip

\noindent {\it Conclusion that $A':=T_{H}(U)/( P )$ is a PBW deformation of $A=T_H(U)/(R)$}. 

Now we show 
 that $A':=T_{H}(U)/( P ) $ is isomorphic, as a filtered algebra, to the fiber
of the deformation $A_t$ at $t=1$ as follows.
Let $A'' = (A_t)|_{t=1}$.
Then $A''$ is generated by $V$ and $H$ and one thus obtains a 
surjective algebra homomorphism 
$
T_{H}(U)\cong T_k(V)\# H \rightarrow A''.
$
The elements of $P$ lie in the kernel (by definition of $A''$),
and thus this map induces a surjective algebra homomorphism
$
A'=T_{H}(U)/( P ) \twoheadrightarrow A''.
$
This map is in fact an isomorphism of filtered algebras by a
dimension argument in each degree.
Therefore $A'$ is a PBW deformation of $A$, since $A''$ is as such [Proposition~\ref{prop:PBWA_t}].
\qed


\section{Examples} \label{sec:examples}

For our examples, we restrict $k$ to be an algebraically closed field of characteristic zero.
There are many interesting examples, both known and new, in this setting. 
Less is known about
Hopf actions on Koszul algebras and corresponding deformations in positive characteristic. 

As an application of Theorem~\ref{thm:main}, we provide various examples of PBW deformations $\mc{D}_{B,\kappa}$ of smash products $B \#H$; recall Notations~\ref{not:B,H,kappa} and~\ref{def:D Bkap}. We do this by describing  deformation parameter(s) $\kappa = \kappa^C + \kappa^L$ below. In particular, Examples~\ref{ex:CBH},~\ref{ex:H8}, and~\ref{ex:Ha1} involve semisimple Hopf actions, and Examples~\ref{ex:T(n)},~\ref{ex:Hsw}, and~\ref{ex:Uqsl2} involve non-semisimple Hopf actions on (skew) polynomial rings. Recall that skew polynomial rings are Koszul by \cite[Example~4.2.1 and Theorem~4.3.1]{PP}.

\subsection{Semisimple Hopf actions}
We begin by revisiting the well-known PBW deformations of Crawley-Boevey and Holland \cite{CBH}. 

\begin{example} \label{ex:CBH}
Take $H = k\Gamma$, for $\Gamma$ a finite subgroup of $SL_2(k)$, and $B = k[u,v]$. For $g = \begin{pmatrix} a&b\\c&d \end{pmatrix} \in \Gamma$, let the action of $g$ on $B$ be given by
$g \cdot u = au+cv$ and $g \cdot v = bu+dv.$  

By \cite{CBH},  the deformation parameter $\kappa$ of the PBW deformation $\mc{D}_{B,\kappa}$ of $B\# H$ must be in the center of $\Gamma$, which we verify again with Theorem~\ref{thm:main}. We assume here that $\kappa^L \equiv 0$ as in \cite{CBH}.

Since $\dim_k V = 2$, only condition (a)  of Theorem~\ref{thm:main} applies. 
So we have that for all $g \in \Gamma$: $g \cdot (\kappa(uv-vu)) = \kappa(g \cdot(uv-vu))$. Now since the determinant of $g$ is 1, $g \cdot(\kappa(uv-vu)) = \kappa(uv-vu)$, and the image of $\kappa$ lies in the center of $k\Gamma$. 
That is, 
$$\mc{D}_{B,\kappa} = \frac{k \langle u,v \rangle \# k\Gamma}{(uv-vu - \lambda)}$$
is a PBW deformation of $k[u,v] \# k\Gamma$ if and only if $\lambda \in Z(k\Gamma)$.
\end{example}

It is worth pointing out that there are analogues of Crawley-Boevey-Holland algebras when working in positive characteristic; see the work of Emily Norton \cite{EN} 
for some examples that are quite different from those in characteristic zero.

The following two Hopf actions were produced by the first author in joint work with Kenneth Chan, Ellen Kirkman, and James Zhang \cite{CWZ:H8}. We thank Chan, Kirkman, and Zhang for permitting us to use these results.

\begin{example} \label{ex:H8}
Let $H := H_8$ be the unique noncommutative noncocommutative semisimple 8-dimensional Hopf algebra 
\cite{KP, Masuoka}, and let $B$ = $k\langle u,v \rangle/(u^2+v^2)$ (which is isomorphic to the skew polynomial ring $k\langle u,v \rangle/(uv+vu)$).
The Hopf algebra $H_8$ is generated by $x$, $y$, $z$ and the relations are 
$$x^2 = y^2 = 1, \quad xy=yx, \quad zx=yz, \quad zy=xz, \quad z^2 = \frac{1}{2}(1+x+y-xy).$$
The rest of the structure of $H_8$ and left $H_8$-action on $B$ are given by 
\[
\begin{array}{l}
\Delta(x) = x \otimes x, \quad \Delta(y) = y\otimes y, \quad 
\Delta(z) = \frac{1}{2}(1 \otimes 1 + 1 \otimes x + y \otimes 1 - y\otimes x) (z \otimes z),\\
\smallskip
\epsilon(x) = \epsilon(y) = \epsilon(z) = 1, \quad S(x) = x, \quad S(y) = y, \quad S(z) =z,\\
x \cdot u = -u, \quad x \cdot v = v, \quad y \cdot u = u, 
\quad y \cdot v = -v, \quad z \cdot u = v, \quad z \cdot v = u.
\end{array}
\]

Let $r:= u^2 + v^2$ and note that 
$$x \cdot r = r, \quad y \cdot r = r, \quad  z \cdot r = r,$$
so the ideal of relations of $B$, $I = \langle r \rangle$, is $H$-stable. 

Since $\dim_k V = 2$, only condition (a)  of Theorem~\ref{thm:main} applies. 
 We begin by computing $\kappa^C$. Let $\kappa^C(r) = \gamma_0 + \gamma_1 x + \gamma_2 y  + \gamma_3 xy + \gamma_4 z + \gamma_5 xz +\gamma_6 yz + \gamma_7 xyz$ for some scalars $\gamma_i\in k$. 
Since $h \cdot (\kappa^C(r)) = \sum h_1 (\kappa^C(r)) S(h_2)$ (see Section~\ref{subsec:actions}),  both $x \cdot (\kappa^C(r)) = \kappa^C(r)$ and $y \cdot(\kappa^C(r)) = \kappa^C(r)$ imply that $\gamma_7 = \gamma_4$ and $\gamma_6 = \gamma_5.$ Moreover, 
$$z \cdot (\kappa^C(r)) = \gamma_0 + \gamma_2 x + \gamma_1 y + \gamma_3 xy + \gamma_4 z + \gamma_5 xz + \gamma_5 yz + \gamma_4 xyz = \kappa^C(r),$$ which implies that $\gamma_2 = \gamma_1$.
Thus,
\begin{equation} \label{eq:H8kC}
\kappa^C(r) =: g(\gamma_0, \gamma_1, \gamma_3, \gamma_4, \gamma_5) 
= \gamma_0 + \gamma_1 (x +  y) + \gamma_3 xy + \gamma_4 (z + xyz) + \gamma_5 (xz + yz).
\end{equation}

On the other hand, let $\kappa^L(r) =  u  \otimes f + v\ot f' \in V \otimes H$ with $f = \delta_0 + \delta_1 x + \delta_2 y  + \delta_3 xy + \delta_4 z + \delta_5 xz +\delta_6 yz + \delta_7 xyz$ and 
$f' = \delta_0'+\delta_1' x + \delta_2'y +\delta_3'xy + \delta_4'z+\delta_5'xz +\delta_6'yz+\delta_7'xyz$ for some scalars $\delta_i,\delta_i'\in k$. Note that $h \cdot (\kappa^L(r)) = \sum {h_1} \cdot  u \otimes h_2 f S(h_3)
 + \sum {h_1}\cdot v\otimes h_2 f'S(h_3)$ (see Section~\ref{subsec:actions}).
Since $x  \cdot(\kappa^L(r)) = \kappa^L(r)$,  it follows that: 
$$\delta_0 = \delta_1 = \delta_2 = \delta_3 = 0, ~\delta_4 = -\delta_7, ~\delta_5 = -\delta_6 \ \ \
\mbox{and} \ \ \ \delta'_4 = \delta'_7, ~\delta'_5 = \delta'_6.$$
By considering the coefficient of $u$ in the equation $y \cdot (\kappa^L(r)) = \kappa^L(r)$, we now find that $f=0$.
Similarly, by considering the coefficient of $v$ in the equation $y \cdot (\kappa^L(r)) = \kappa^L(r)$, we find that $f'=0$.  Hence, $\kappa^L(r) = 0$.

Thus the deformation parameter $\kappa$ of $\mc{D}_{B, \kappa}$ equals its constant part $\kappa^C$, which depends on five scalar parameters as described above. In short, 
$$\mc{D}_{B, \kappa} = \frac{k\langle u,v \rangle \#H_8}{\left( u^2+v^2 - \kappa (u^2+v^2) \right)}$$ is a PBW deformation of $\left(k \langle u,v \rangle/(u^2 + v^2)\right) \#H_8$ if and only if $\kappa(u^2+v^2) = g(\gamma_0, \gamma_1, \gamma_3, \gamma_4, \gamma_5)$ as given in~\eqref{eq:H8kC}.  This yields a five parameter family of PBW deformations of $B \# H_8$.
\end{example}

\begin{example}  \label{ex:Ha1}
Let $H$ be $H_{a:1}$, one of the 16-dimensional semisimple Hopf algebras classified by Kashina in \cite{Kashina16} and let $B$ be the skew polynomial ring:
$$B = \frac{k\langle t,u,v,w \rangle}{\left( \begin{array}{lll}
r_{tu}:=tu-ut, & r_{tv}:=tv-vt, & r_{tw}:=tw+wt\\
r_{uv}:=uv-vu, & r_{uw}:=uw+wu, &r_{vw}:=vw-wv
\end{array}\right)}.$$
The Hopf algebra $H_{a:1}$ is generated by $x$, $y$, $z$ subject to relations
$$x^4 = y^2 = z^2=1, \quad yx=xy, \quad zx = xyz,  \quad zy=yz.$$
The rest of the structure of $H_{a:1}$ and the left $H_{a:1}$-action on $B$ are given by
\[
\begin{array}{l}
\Delta(x) = x\otimes x, \quad \quad \Delta(y) = y \otimes y, \quad\quad 
\Delta(z) = \frac{1}{2}(1 \otimes 1 + 1 \otimes x^2 + y \otimes 1 - y \otimes x^2)(z \otimes z),\\
\epsilon(x) = \epsilon(y) = \epsilon(z) = 1,\quad \quad S(x) = x^3, \quad\quad S(y) = y, 
\quad \quad S(z) = \frac{1}{2}(1 + x^2 + y -x^2 y)z,
\end{array}
\]
\[
\begin{array}{llllll}
x \cdot t = it, &\quad \quad y \cdot t = -t, &\quad \quad z \cdot t = u,
&\quad \quad  x \cdot u = -iu, &\quad \quad y \cdot u = -u, &\quad \quad z \cdot u = t,\\
x \cdot v = v, &\quad \quad y \cdot v = -v, &\quad \quad z \cdot v = w,
&\quad \quad  x \cdot w = -w, &\quad \quad y \cdot w = -w, &\quad \quad z \cdot w = v,
\end{array}
\]
where $i$ is a primitive fourth root of unity in $k$.
Note that
\[
\begin{array}{llllll}
x \cdot r_{tu} = r_{tu}, & \; \;  x \cdot r_{tv} = i r_{tv}, & \; \; x \cdot r_{tw} = -i r_{tw}, &
\; \; x \cdot r_{uv} = -i r_{uv}, &\; \; x \cdot r_{uw} = i r_{uw}, & \; \; x \cdot r_{vw} = -r_{vw},\\
y \cdot r_{tu} = r_{tu}, & \; \; y \cdot r_{tv} = r_{tv}, & \; \; y \cdot r_{tw} =  r_{tw}, &
\; \; y \cdot r_{uv} =  r_{uv}, &\; \; y \cdot r_{uw} = r_{uw}, & \; \; y\cdot r_{vw} = r_{vw},\\
z \cdot r_{tu} = r_{tu}, & \; \;  z\cdot r_{tv} = r_{uw}, & \; \; z \cdot r_{tw} = r_{uv}, &
\; \; z\cdot r_{uv} =  r_{tw}, &\; \; z \cdot r_{uw} = r_{tv}, & \; \; z \cdot r_{vw} = -r_{vw}.
\end{array}
\]
So, the ideal of relations $I = \langle r_{tu}, r_{tv}, r_{tw}, r_{uv}, r_{uw}, r_{vw} \rangle$ of $B$ is $H$-stable.

Now we compute the possible values $\kappa^C(r) \in H$ for all $r \in I$, under condition (a) of Theorem~\ref{thm:main}. Take $\kappa^C(r) =g(\underline{\gamma}) \in H$ given as follows:
\begin{equation*}
\begin{split}
g(\underline{\gamma}) = & \gamma_0 + \gamma_1 x + \gamma_2 x^2 + \gamma_3 x^3
+ \gamma_4 y + \gamma_5 xy + \gamma_6 x^2 y + \gamma_7 x^3 y\\
& \quad + \gamma_8 z+ \gamma_9 xz + \gamma_{10} x^2 z+ \gamma_{11} x^3 z
+ \gamma_{12} yz + \gamma_{13} xyz + \gamma_{14} x^2 yz + \gamma_{15} x^3 yz,
\end{split}
\end{equation*}
where $\gamma_i\in k$.
Note that $h \cdot g(\underline{\gamma}) = \sum h_1 g(\underline{\gamma})S(h_2)$. With the assistance of Affine, a subpackage of Maxima, we have the following computations:

\begin{eqnarray} \label{eq:g alpha x}
\begin{array}{rl}
\hspace{.5in}x \cdot  g(\underline{\gamma}) &= x ~g(\underline{\gamma})~ x^3\\
 &= \gamma_0 + \gamma_1 x + \gamma_2 x^2 + \gamma_3 x^3
+ \gamma_4 y + \gamma_5 xy + \gamma_6 x^2 y + \gamma_7 x^3 y\\
&   \quad+ \gamma_8 yz+ \gamma_9 xyz + \gamma_{10} x^2 yz+ \gamma_{11} x^3y z
+ \gamma_{12} z + \gamma_{13} xz + \gamma_{14} x^2 z + \gamma_{15} x^3 z ;
\end{array}
\end{eqnarray}
\begin{equation}  \label{eq:g alpha y}
y \cdot  g(\underline{\gamma}) = y ~g(\underline{\gamma})~ y  = g(\underline{\gamma});
\end{equation}
\begin{eqnarray}  \label{eq:g alpha z}
\begin{array}{rl}
\hspace{.5in} z \cdot  g(\underline{\gamma}) &= 
\frac{1}{2}\left(z ~g(\underline{\gamma}) ~S(z) + z~ g(\underline{\gamma})~ S(x^2z) + yz ~g(\underline{\gamma}) ~S(z) - yz ~g(\underline{\gamma}) ~S(x^2z) \right)\\
&= \gamma_0  + \gamma_1 xy + \gamma_2 x^2 + \gamma_3 x^3y + \gamma_4 y + \gamma_5 x + \gamma_6 x^2y + \gamma_7 x^3 \\
& \quad+ \gamma_8 z + \gamma_9 xyz + \gamma_{10} x^2z + \gamma_{11} x^3 yz + \gamma_{12} yz + \gamma_{13} xz + \gamma_{14} x^2 yz + \gamma_{15}x^3z.
\end{array}
\end{eqnarray}

For $r_{tu}$, let $\kappa^C(r_{tu}) = g(\underline{\gamma})$. We have that $x \cdot \kappa^C(r_{tu}) = \kappa^C(r_{tu})$ and $y  \cdot \kappa^C(r_{tu}) = \kappa^C(r_{tu})$ implies that $\gamma_8 = \gamma_{12}, ~\gamma_9 = \gamma_{13}, ~\gamma_{10} = \gamma_{14}, ~\gamma_{11} = \gamma_{15}.$
Moreover, $z \cdot \kappa^C(r_{tu}) = \kappa^C(r_{tu})$ implies that
$\gamma_1=\gamma_5,
~\gamma_3=\gamma_7,
~\gamma_9=\gamma_{13},
~\gamma_{11}=\gamma_{15}.$
Therefore,
\begin{equation} \label{eq:kappa r tu}
\begin{split}
\kappa^C(r_{tu}) &= g(\gamma_0, \gamma_1, \gamma_2, \gamma_3, \gamma_4, \gamma_6, \gamma_8, \gamma_9, \gamma_{10}, \gamma_{11})\\
 &= \gamma_0 + \gamma_1 (x+xy) + \gamma_2 x^2 + \gamma_3 (x^3+x^3 y)
+ \gamma_4 y + \gamma_6 x^2 y \\
& \quad + \gamma_8 (z+yz)+ \gamma_9 (xz+xyz) + \gamma_{10} (x^2 z +x^2yz)+ \gamma_{11} (x^3 z +x^3yz).
\end{split}
\end{equation}

For $r_{vw}$, let $\kappa^C(r_{vw}) = g(\underline{\gamma}')$. We have that $x \cdot \kappa^C(r_{vw}) = -\kappa^C(r_{vw})$ and $y  \cdot \kappa^C(r_{vw}) = \kappa^C(r_{vw})$ implies that $\gamma'_0 = \cdots = \gamma'_7 =0, ~ \gamma'_8 = -\gamma'_{12},~\gamma'_9 = -\gamma'_{13}, ~\gamma'_{10} = -\gamma'_{14}, ~\gamma'_{11} = -\gamma'_{15}.$
Moreover, we have that $z \cdot \kappa^C(r_{vw}) = -\kappa^C(r_{vw})$. So the conditions on $\gamma'_i$ in \eqref{eq:g alpha z} then imply that $\gamma'_i = 0$, for $i =0,\dots,7,8,10,12,14$ with $ \gamma'_9 = -\gamma'_{13},  ~\gamma'_{11} = -\gamma'_{15}$. Thus,
\begin{equation} \label{eq:kappa r vw}
\kappa^C(r_{vw}) ~=~ g(\gamma'_9, \gamma'_{11})
 ~=~ \gamma'_9(xz-xyz) + \gamma'_{11}(x^3z-x^3yz) .
\end{equation}

For $r \neq r_{tu}, r_{vw}$, we have that $x \cdot \kappa^C(r) = \pm i \kappa^C(r)$  implies that $\kappa^C(r) = 0$.

\smallskip

We compute $\kappa^L(r)$ under condition (a) of Theorem~\ref{thm:main}.  Fix $r\in I$ and let 
$$\kappa^L(r) = t\otimes f_t + u\otimes f_u + v\otimes f_v + w\otimes f_w \in V\otimes H , $$
for some $f_t, f_u, f_v, f_w \in H$. Since $y$ is central in $H$ and $y\cdot r = r$
for each relation $r$, we have that
$$
\begin{aligned}
\kappa^L(r) ~=~ y \cdot \kappa^L(r)  
& ~=~ y \cdot t \ot y f_t S(y) + y\cdot u\otimes y f_u S(y) + y\cdot v\otimes y f_vS(y)
  + y\cdot w\otimes yf_wS(y) \\
& ~=~ - t\ot f_t  - u\ot f_u - v\ot f_v - w\ot f_w ~=~ - \kappa^L(r) . 
\end{aligned}
$$ 
Thus, $\kappa^L(r) = 0$.
\smallskip

To finish, we apply to $\kappa$ conditions (b)-(d) of Theorem~\ref{thm:main}. Since $\kappa^L(r) = 0$ for all $r \in I$, only condition (c) is pertinent. Namely, we only need to impose $\kappa^C \otimes \id = \id \otimes \kappa^C$ as maps on $\left(I \otimes V\right) \cap \left(V \otimes I\right)$.
This intersection is a 4-dimensional $k$-vector space with basis:
\[
\begin{array}{rl}
s_{tuv} &:= tuv -tvu - utv + uvt + vtu - vut,\\
s_{tuw}&:= tuw+twu-utw-uwt+wtu-wut,\\
s_{tvw}&:= tvw-twv-vtw+vwt+wtv-wvt,\\
s_{uvw}&:=uvw-uwv-vuw-vwu-wuv+wvu.
\end{array}
\]

Since $\kappa^C(r_{tv}) = \kappa^C(r_{uv}) = 0$, we get that
\begin{equation} \label{eq:s_tuv}
\begin{split}
(\kappa^C \otimes \id - \id \otimes \kappa^C)(s_{tuv})
&= \kappa^C(r_{uv})t - \kappa^C(r_{tv}) u + \kappa^C(r_{tu}) v - t \kappa^C(r_{uv}) + u \kappa^C(r_{tv}) - v \kappa^C(r_{tu})\\
& = \kappa^C(r_{tu}) v  - v \kappa^C(r_{tu}).
\end{split}
\end{equation}
Identify $b \in V$ with $b \# 1 \in A$, and $h \in H$ with $1 \# h\in A$. 
Recall that in $A$, we have $(1\# h) (b \#1) = \sum (h_1 \cdot b) \# h_2$. 
Now by using~\eqref{eq:kappa r tu} and by setting \eqref{eq:s_tuv} equal to 0, we get that
\begin{equation}\label{eq:r tu final}
\kappa^C(r_{tu}) ~=~ g(\gamma_0,  \gamma_2) ~=~ \gamma_0 + \gamma_2 x^2.
\end{equation}
Moreover, 
\[
\begin{array}{rl}
(\kappa^C \otimes \id - \id \otimes \kappa^C)(s_{tuw})
&= -\kappa^C(r_{uw})t + \kappa^C(r_{tw}) u + \kappa^C(r_{tu}) w - t \kappa^C(r_{uw}) + u \kappa^C(r_{tw}) - w \kappa^C(r_{tu})\\
& = \kappa^C(r_{tu}) w - w \kappa^C(r_{tu}) ~=~ 0
\end{array}
\]
imposes no new restrictions on $\kappa^C(r_{tu})$, nor do  
$(\kappa^C \otimes \id - \id \otimes \kappa^C)(s_{tvw}) = 0$, $(\kappa^C \otimes \id - \id \otimes \kappa^C)(s_{uvw}) =0.$
Therefore, $\kappa^C(r_{tu})$ is given by \eqref{eq:r tu final}.
 
To compute $\kappa^C(r_{vw})$, consider the calculation below:
\begin{equation} \label{eq:s_tvw}
\begin{split}
(\kappa^C \otimes \id - \id \otimes \kappa^C)(s_{tvw})
&= \kappa^C(r_{vw})t - \kappa^C(r_{tw}) v + \kappa^C(r_{tv}) w - t \kappa^C(r_{vw}) + v \kappa^C(r_{tw}) - w \kappa^C(r_{tv})\\
& = \kappa^C(r_{vw}) t  - t \kappa^C(r_{vw}).
\end{split}
\end{equation}
Now by using~\eqref{eq:kappa r vw} and by setting \eqref{eq:s_tvw} equal to 0, we get that $\kappa^C(r_{vw}) = 0$.

Therefore, the filtered algebra $\mc{D}_{B, \kappa}$ is a PBW deformation of $B \# H_{a:1}$ if and only if the deformation parameter $\kappa = \kappa^C$ of $\mc{D}_{B, \kappa}$ is defined by \eqref{eq:r tu final} for the relation $r_{tu}$, and $\kappa^C(r)= 0$ for $r \neq r_{tu}$.  Hence, we have a two parameter family of PBW deformations of $B \# H_{a:1}$.
\end{example}

\subsection{Non-semisimple Hopf actions}

Here, we consider non-semisimple Hopf actions to illustrate Theorem~\ref{thm:main}. 
We begin with an example of a Taft algebra action.

\begin{example} \label{ex:T(n)}
Let $H = T(n)$,
the $n^2$-dimensional non-semisimple Taft algebra. We take $n \geq 3$ here and we consider the
(slightly different) case 
$n=2$ (the Sweedler algebra) in Example~\ref{ex:Hsw} below. Let $B$ = $k[u,v]$.
The Hopf algebra $T(n)$ is generated by $g$, $x$ and the relations are 
$g^n= 1$, $x^n = 0$, and $xg=\zeta gx$, for $\zeta \in k^{\times}$ a primitive $n$-th root of unity.
The rest of the structure of $T(n)$ and left $T(n)$-action on $B$ are given by 
\[
\begin{array}{c}
\Delta(g) = g \otimes g, \quad \Delta(x) = g\otimes x + x \otimes 1, 
\quad \epsilon(g) = 1, \quad  \epsilon(x) = 0, \quad S(g) = g^{-1}= g^{n-1}, \quad S(x) = -g^{n-1}x,\\
g \cdot u = u, \quad g \cdot v = \zeta^{-1}v, \quad x \cdot u = 0, 
\quad x \cdot v = u.
\end{array}
\]
Let $r:= uv - vu$ and note that $g \cdot r = \zeta^{-1}r$ and $x \cdot r = 0$.  Hence, the ideal of relations $I = \langle r \rangle$ of $B$ is $H$-stable. 

Since $\dim_k V = 2$, only condition (a)  of Theorem~\ref{thm:main} applies. Now, we compute $\kappa^C$. Let $\kappa^C(r) = \sum_{i,j=0}^{n-1} \gamma_{ij} g^i x^j$. Since $h \cdot (\kappa^C(r)) = \sum h_1 (\kappa^C(r)) S(h_2)$,   for $h \in H$,
we have that $g \cdot (\kappa^C(r)) = \zeta^{-1}\kappa^C(r)$ implies that all terms equal zero except when $j=1$; hence
$$\kappa^C(r) = \gamma_0 x + \gamma_1 gx + \dots + \gamma_{n-1} g^{n-1}x,$$
for $\gamma_i \in k$.
Also, $x \cdot (\kappa^C(r)) = 0$ implies that all terms equal zero except when $i = n-1$, so  
\begin{equation} \label{eq:T(n)kC}
\kappa^C(r) = \gamma g^{n-1}x
\end{equation}
for $\gamma \in k$.

On the other hand, let $\kappa^L(r) =  u  \otimes  f +  v\otimes f'  \in V \otimes H$, for $f=\sum_{i,j=0}^{n-1} \lambda_{ij}g^i x^j$ and
$f'=\sum_{i,j=0}^{n-1}\lambda'_{ij}g^ix^j$. Note that $h \cdot (\kappa^L(r)) = \sum {h_1} \cdot  u \otimes h_2 f S(h_3) + \sum {h_1}\cdot  v\otimes h_2 f' S(h_3)$ (see Section~\ref{subsec:actions}). Since $g \cdot (\kappa^L(r)) = \zeta^{-1}\kappa^L(r)$, all terms equal zero except possibly those in the first sum for which
$j=1$ and those in the second sum for which  $j =0$. 
Therefore 
$\kappa^L(r) ~=~ u \otimes f +v\otimes f' $
for  $$f= \lambda_0 x + \lambda_1 g x + \cdots + \lambda_{n-1} g^{n-1} x
\quad \text{and} \quad
f' = \lambda_0' + \lambda_1' g + \cdots + \lambda_{n-1}'g^{n-1}$$
 with
 $\lambda_i, \lambda_i' \in k$.
Applying $x$, we obtain 
\[
\begin{array}{rl}
0 = x \cdot \kappa^L(r) &= (g \cdot u)  \otimes \left(g f S(x) + xfS(1)\right) + (x \cdot u) \otimes f +(g\cdot v) \ot (g f' S(x) + xf' S(1)) + (x\cdot v) \ot f'  \\
&=u \otimes(-gfg^{n-1}x + xf +f') + \zeta^{-1}v\ot (-gf'g^{n-1}x + xf'). 
\end{array}
\]
It follows that 
$$ -gfg^{n-1}x + xf + f'=0 \ \ \ \mbox{and} \ \ \ -gf'g^{n-1}x + xf'=0.$$
Since $f'$ is in the group algebra $kG(T(n)) \cong k \mathbb{Z}_n$ and $g^n=1$, the second equation implies
that $xf' = f'x$, and so $f' = \lambda_0$ is constant. The first equation
further implies that $f'=0$ and 
that all terms of $f$ are equal to zero except when $i = n-1$. Thus,
\begin{equation} \label{eq:T(n)kL}
\kappa^L(r) ~=~ u \otimes \lambda g^{n-1} x,
\end{equation}
for $\lambda \in k$. 

In summary, 
$$\mc{D}_{B, \kappa} = \frac{k\langle u,v \rangle \#T(n)}{\left( uv-vu - \kappa (uv-vu) \right)}$$ is a PBW deformation of $k[u,v] \#T(n)$ if and only if the deformation map $\kappa$ equals $\kappa^C + \kappa^L$ as given in \eqref{eq:T(n)kC} and~\eqref{eq:T(n)kL}. So, we have a two parameter family of PBW deformations of $k[u,v] \# T(n)$.
\end{example}

\begin{example} \label{ex:Hsw}
Let $H$ be $H_{Sw} = T(2)$,
the $4$-dimensional non-semisimple Sweedler algebra, which is a Taft algebra with $n=2$. Let $B$ = $k[u,v]$.
Retaining the notation from Example~\ref{ex:T(n)},  the Hopf algebra $H_{Sw}$ is generated by $g$, $x$ and acts on $B$ by 
$g \cdot u = u, ~ g \cdot v = -v, ~x \cdot u = 0, ~ x \cdot v = u.$
Similar to Example~\ref{ex:T(n)}, let $r:= uv - vu$ and note that $g \cdot r = -r$ and $x \cdot r = 0$. So, $I = \langle r \rangle$ is $H$-stable.

 Let $\kappa^C(r) = \gamma_0 + \gamma_1 g + \gamma_2 x + \gamma_3 gx$. We have that $g \cdot (\kappa^C(r)) = -\kappa^C(r)$ implies that $\gamma_0 = \gamma_1 = 0$. 
 Moreover, $x \cdot (\kappa^C(r)) = 0$ does not yield new restrictions on $\kappa^C(r)$. Thus, for $\gamma, \gamma' \in k$, we get that
$\kappa^C(r) = \gamma x + \gamma' gx.$
In the same fashion as Example~\ref{ex:T(n)}, we also get that
$\kappa^L(r) = u \otimes (\lambda x + \lambda' gx)$,
for $\lambda$, $\lambda'$ $\in k$.

In summary, 
$$\mc{D}_{B, \kappa} = \frac{k\langle u,v \rangle \#H_{Sw}}{\left( uv-vu - \kappa (uv-vu) \right)}$$ is a PBW deformation of $k[u,v] \#H_{Sw}$ if and only if the deformation map $\kappa$ equals $\kappa^C + \kappa^L$, where
$$\kappa^C(uv-vu) = \gamma x+ \gamma' gx \quad \text{and} \quad \kappa^L(uv-vu) = u \ot (\lambda x + \lambda'gx)$$ for $\gamma, \gamma', \lambda, \lambda' \in k$. Thus, we have a four parameter family of PBW deformations of $k[u,v] \# H_{Sw}$.
\end{example}

\begin{remark}
The invariant ring resulting from the action of $H_{Sw}$ on $k[u,v]$ is isomorphic to the polynomial ring $k[u,v^2]$, that is to say, $k[u,v]^{H_{Sw}}$ is regular. Recall that the Shephard-Todd-Chevalley Theorem states that when given a finite group ($G$-) action on a commutative polynomial ring $R$ that is linear and faithful, $R^G$ is regular if and only if $G$ is a reflection group. Our results would then suggest that 
$H_{Sw}$ is a ``reflection Hopf algebra.'' Ram and  Shepler showed that there are no non-trivial PBW deformations of $k[v_1, \dots, v_n] \# kG$ for many complex reflection groups $G$; such deformations are referred to as {\it graded Hecke algebras} \cite{RS}. Now by broadening their setting to Hopf actions on (possibly noncommutative) regular algebras, we consider new objects in representation theory: Hopf analogues of graded Hecke algebras. Non-trivial examples of these objects exist as we showed in the example above. Further examples and a general explanation of this phenomenon are worthy of further investigation.
\end{remark}

Now we consider the well-known Hopf action of $U_q(\mathfrak{sl}_2)$ on $k\langle u,v \rangle/(uv-qvu)$, where $q \in k^{\times}$ with $q^2 \neq 1$. A PBW deformation of $\left(k\langle u,v \rangle/(uv-qvu)\right) \# U_q(\mathfrak{sl}_2)$ was studied by Gan and Khare in \cite{GK}; we recover their result below. Such algebras are known as {\it quantized symplectic oscillator algebras of rank 1}.

\begin{example} \label{ex:Uqsl2}
Fix $q \in k^{\times}$, with $q^2 \neq 1$.
Let $H$ be the Hopf algebra $U_q(\mathfrak{sl}_2)$, and $B$ = $k\langle u,v \rangle/(uv-qvu)$. As in \cite[I.6.2]{book:BrownGoodearl}, we take $U_q(\mathfrak{sl}_2)$ to be generated by $E, F, K, K^{-1}$ with relations:
$$EF-FE=(q-q^{-1})^{-1}(K - K^{-1}), \quad \quad KEK^{-1} = q^2 E, \quad \quad KFK^{-1} = q^{-2}F, \quad \quad KK^{-1} = K^{-1}K = 1.$$
So, $U_q(\mathfrak{sl}_2)$ has a $k$-vector space basis $\{E^i F^j K^m\}_{i,j \in \NN; m \in \ZZ}$. The rest of the structure of $U_q(\mathfrak{sl}_2)$ and left $U_q(\mathfrak{sl}_2)$-action on $B$ is given by:
\[
\begin{array}{llll}
\Delta(E) = E \ot 1 + K \ot E, \quad &\Delta(F) = F \ot K^{-1} + 1 \ot F, \quad &\Delta(K) = K \ot K,\quad  &\Delta(K^{-1}) = K^{-1} \ot K^{-1},\\
\epsilon(E) = 0, ~&\epsilon(F) = 0 ~ &\epsilon(K) = 1, ~&\epsilon(K^{-1}) =1,\\
S(E) = -K^{-1}E, ~ &S(F) = -FK, ~ &S(K) = K^{-1}, ~ &S(K^{-1}) =K,
\end{array}
\]

\vspace{-.1in}

\[
\begin{array}{llll}
E \cdot u = 0, ~\quad  \quad &F \cdot u =v, ~\quad \quad & K \cdot u = qu, ~ \quad\quad  &K^{-1} \cdot u = q^{-1}u,\\
E \cdot v = u, ~ \quad \quad &F \cdot v =0, ~\quad \quad & K \cdot v = q^{-1}v, ~ \quad \quad &K^{-1} \cdot v = qv.
\end{array}
\]
Let $r:=uv-qvu$ and note that $E \cdot r = F \cdot r =0$ and $K \cdot r = K^{-1} \cdot r =r$. Hence, the ideal of relations $I = \langle r \rangle$ of $B$ is $H$-stable. 

Since $\dim_k V$=2, only condition (a) of Theorem~\ref{thm:main} applies.
Let us compute $\kappa^C(r)$. Since $K \cdot \kappa^C(r) = \kappa^C(K \cdot r) = \kappa^C(r)$, we have that $K \kappa^C(r) S(K)  = \kappa^C(r)$ (see Section~\ref{subsec:actions}). Hence, $K \kappa^C(r) = \kappa^C(r) K$. Moreover,
$$0 ~=~ \kappa^C(E \cdot r) ~=~ E \cdot \kappa^C(r) ~=~ E \kappa^C(r)S(1) + K \kappa^C(r) S(E),$$
so $E \kappa^C(r) = \kappa^C(r) E$. Likewise, $F \cdot \kappa^C(r) = 0$ implies that $F \kappa^C(r) = \kappa^C(r) F$. So, $\kappa^C(r)$ is in the center of $U_q(\mathfrak{sl}_2)$.
For $q$ a non-root of unity, the center of $U_q(\mathfrak{sl}_2)$ is generated by the {\it quantum Casimir element} \cite[Theorem~VI.4.8]{Kassel}, 
\[
C_q ~=~ EF + \frac{q^{-1}K + qK^{-1}}{(q-q^{-1})^2} ~=~ FE + \frac{qK+q^{-1}K^{-1}}{(q-q^{-1})^2},
\]
whereas for $q$ a root of unity,  the elements $E^e$, $F^e$, $K^e$ also belong to the center of $U_q(\mathfrak{sl}_2)$, where  $e=$ord($q^2$).

To compute $\kappa^L(r)$, let $\kappa^L(r) =  u  \otimes \sum \gamma_{ijm} E^i F^j K^m + v\otimes  \sum \gamma'_{ijm} E^iF^jK^m$ for $ \gamma_{ijm}, \gamma'_{ijm} \in k$. Then, 
\[
\begin{array}{rl} 
\kappa^L(r) ~&=~ \kappa^L(K \cdot r) ~=~ \sum K \cdot  u  \otimes \gamma_{ijm}K( E^i F^j K^m )K^{-1} + \sum K\cdot v\otimes \gamma'_{ijm}K(E^iF^jK^m)K^{-1}\\ 
~&=~ \sum q u  \otimes  \gamma_{ijm} q^{2(i-j)}E^i F^j K^m + \sum q^{-1}v\otimes
   \gamma_{ijm}' q^{2(i-j)}E^iF^jK^m\\
 ~&=~
\sum  q^{2(i-j)+1} u  \otimes \gamma_{ijm} E^i F^j K^m + \sum q^{2(i-j)-1}v\otimes \gamma_{ijm}'E^iF^jK^m.
\end{array}
\]
Thus given $m \in \ZZ/ n\ZZ$, define the subspace $V_m \subset U_q(\mathfrak{sl}_2)$ to be the $k$-span of all monomials $E^iF^jK^{\ell}$ such that $j-i \equiv m \mod n$. Then 
$\kappa^L(uv-qvu) \in u \ot V_{2^{-1}} + v \ot V_{-2^{-1}}$ if $q$ is a primitive root of unity of odd order, and $\kappa^L(uv-qvu) =0$ otherwise.

Therefore, 
$$\mc{D}_{B, \kappa} = \frac{k\langle u,v \rangle \# U_q(\mathfrak{sl}_2)}{(uv-qvu-\kappa(uv-qvu))}$$
is a PBW deformation of $(k\langle u,v \rangle/(uv-qvu)) \# U_q(\mathfrak{sl}_2)$ if and only if $\kappa=\kappa^C+\kappa^L$ where $\kappa^C(uv-qvu)$ is in the center of $U_q(\mathfrak{sl}_2)$ and $\kappa^L(uv-qvu)$ is given as above.
\end{example}

More generally, there is a standard $U_q(\mathfrak{sl}_n)$-action on a $q$-polynomial ring $B$ in $n$ variables. 
\begin{question}
Are there nontrivial PBW deformations of the resulting smash product algebra $B \# U_q(\mathfrak{sl}_n)$? 
\end{question}
These would be {\it quantized symplectic oscillator algebras of rank $n-1$}, and   merit further investigation.

\section*{Acknowledgments}
Part of this work was completed during the Spring 2013 program on Noncommutative Algebraic Geometry and Representation Theory at the Mathematical Sciences Research Institute. We thank the organizers for the facilities and generous hospitality. Walton and Witherspoon are supported by the National Science Foundation, grants \#DMS-1102548 and \#DMS-1101399, respectively.

\bibliography{HopfDOA_biblio}

\end{document}